\newtheorem{theorem}{Theorem}[section]
\newtheorem{lemma}[theorem]{Lemma}
\newtheorem{corollary}[theorem]{Corollary}
\newtheorem{remark}[theorem]{Remark}
\newtheorem{proposition}[theorem]{Proposition}
\newcommand{\Z}{\mbox{$\mathbb Z$}}
\newcommand{\Q}{\mbox{$\mathbb Q$}}
\newcommand{\N}{\mbox{$\mathbb N$}}     % For Natural numbers
\newcommand{\C}{\mbox{$\mathbb C$}}     % For Complex numbers
\begin{document}
	\title[Diophantine Equations for Polynomial Recursive Sequences]{Diophantine Equations for Polynomial Recursive Sequences}
	
	\author[Darsana]{Darsana N}
	\address{Darsana N, Department of Mathematics, National Institute of Technology Calicut, 
		Kozhikode-673 601, India.}
	\email{darsana\_p230059ma@nitc.ac.in; darsanasrinarayan@gmail.com}

	\author[Rout]{S. S. Rout}
	\address{Sudhansu Sekhar Rout, Department of Mathematics, National Institute of Technology Calicut, 
		Kozhikode-673 601, India.}
	\email{sudhansu@nitc.ac.in; lbs.sudhansu@gmail.com}

	\dedicatory{}
	\thanks{2020 Mathematics Subject Classification: 11B37 (Primary), 11D61, 11R58 (Secondary).\\
		Keywords: Polynomial recurrence sequences, Diophantine equations, $S$-units, function fields, Brownawell-Masser inequality}
	\begin{abstract}
        We study the Diophantine equation of type $U_n(x)=V_m(y)$, where $(U_n)_{n\geq 0}$ and $(V_m)_{m\geq 0}$ are polynomial power sums defined over a number field $K$. By applying the finiteness criterion of Bilu and Tichy, we show under appropriate assumptions that equation $U_n(x)=V_m(y)$ has infinitely many solutions with bounded $\mathcal{O}_S$-denominator. We also study decomposable polynomials in third and second order linear recurrence sequences. In particular, we show that if $W_n(x)=g(h(x))$ for a simple third order linear recurrence sequence $(W_n(x))_{n\geq 0}$ of complex polynomials, then $\deg g$ is bounded. Furthermore, we show that if $(u_{n_1}+u_{n_2})(x)=g(h(x))$ for a binary recurrence sequence $(u_n(x))_{n\geq 0}$ then $\deg g$ is bounded.
	\end{abstract}
	\maketitle
	\pagenumbering{arabic}
	\pagestyle{headings}

	\section{Introduction}
	The study of polynomial Diophantine equations of    the form
	\begin{equation}\label{eq1}
		f(x)=g(y)
	\end{equation} 
    have been of interest to many number theorists. Observe that a defining equation of an elliptic curve $y^2=x^3+ax+b$ with $a,b\in \Q$, $4a^3+27b^2\neq 0$ is an example of such equation.  The essential question concerning \eqref{eq1} is whether there exist finitely or infinitely many integral solutions of \eqref{eq1} in $S$-integers $x$ and $y$. By Siegel’s classical theorem, it follows that an irreducible algebraic curve defined over a number field can have only finitely many $S$-integral points, unless it has genus zero and at most two points at infinity. However, this approach is ineffective since it does not provide explicit bounds for the size of the solutions $(x,y)$.  Over the years, many authors such as Baker, Sprindžuk, Brindza, Bugeaud, and others developed effective versions of this finiteness result, especially for particular classes of equations (see for example \cite{baker1969bounds}, \cite{sprindzuk1982classical}). A major advancement in this direction was made by Bilu and Tichy \cite{bilu2000diophantine}, who established precise conditions under which \eqref{eq1} has infinitely many $S$-integer solutions with a bounded denominator. Their work marked a major step forward  in the study of Diophantine equations of the type \eqref{eq1}. It turns out to be more convenient to investigate a slightly more general version of the problem.

   One well-known problem involving the Diophantine equations with linear recurrences is to determine how many zeros occur in such a sequence, or more generally, to estimate the number of indices $n \in \mathbb{N}$ satisfying the equation $G_n(x) = a$ for a given $a$ in a function field (see \cite{fuchs2005effective}). Glass, Loxton, and van der Poorten \cite{glass1981identifying} proved that if the sequence $(G_n(x))_{n=0}^{\infty}$ is nonperiodic and nondegenerate, 
  then there exist only finitely many pairs of   integers $m, n$ with $m>n\geq 0$ such that
  \begin{equation*}
    G_n(x) = G_m(x).
  \end{equation*} This result has generalized to an equation of the form
  \begin{equation}\label{eqa1}
        G_n(x) = G_m(P(x)),
   \end{equation} where $P(x)\in K[x]$ (see \cite{fuchs2002diophantine}).
   In \cite{dujella2001diophantine}, Dujella and Tichy investigated the family of linear recurrence sequences defined by
   $G_{n+1}(x) = x G_n(x) + B G_{n-1}(x), \quad G_0(x) = 0, \quad G_1(x) = 1,$ 
    where $B$ is a nonzero integer. They proved that for such sequences, there exist no polynomial $P(x) \in \mathbb{C}[x]$ satisfying \eqref{eqa1}
    whenever $m,n \ge 3$ and $m \neq n$. As a continuation of this, Fuchs \cite{fuchs2004diophantine} obtained conditions under which the Diophantine equation \eqref{eqa1} has finitely many solutions in $n$ and $m$ for a third order linear recurrence. Moreover, Fuchs et al. \cite{fuchs2003diophantine} have generalized this result for an arbitrary degree recurrence. 
     By applying the result due to Bilu and Tichy \cite{bilu2000diophantine}, these findings led to the conclusion that the Diophantine equation
    \[
     G_n(x) = G_m(y)
    \]
    admits only finitely many integer solutions in $n,m,x,y$ with $n \neq m$. Further, considering simple linear recurrences of polynomials,  Fuchs and Heintze \cite{fuchs2021diophantine} examined the Diophantine equation
\begin{equation*}\label{eq3}
		G_n(x)=H_m(y)
\end{equation*} and proved that it admits infinitely many rational solutions with a bounded denominator only when certain specific identity hold.
%     In \cite{dujella2001diophantine}, Dujella and Tichy have  shown how this criterion can be applied to a special family of polynomials defined by a second-order linear recurrences.
In \cite{kreso2017diophantine}, Kreso has given a similar result concerning \eqref{eq1} when $f$ and $g$ are lacunary polynomial. By analyzing the properties of monodromy group of a polynomial, Kreso and Tichy have given a generalized result in \cite{kreso2018diophantine}.  
	
%Another significant area of study involves the Diophantine equations with linear recurrences.
% Also, Fuchs et al. \cite{fuchs2002diophantine} have given a finiteness result for the Diophantine equation
%\begin{equation*}\label{eq2}
%		G_n(x)=G_m(P(x))
%\end{equation*} in the unknown integer pair $(n,m)$, where $G_n(x)$ is second order linear recurrence sequence of polynomials.
%In this paper they have found some conditions under which the equation \eqref{eq2} has only finitely many solution $(n,m)$.
%Later, the same authors have extended this result to a polynomial linear recurrence of order $d\geq 2$ (see \cite{fuchs2003diophantine}). In \cite{fuchs2004diophantine}, Fuchs, Peth{\H{o}} and Tichy have generalized this result for a linear recurrence sequence of polynomials of order three . Further, considering simple linear recurrences of polynomials,  Fuchs and Heintze  examined the Diophantine equation
%\begin{equation*}\label{eq3}
%		G_n(x)=H_m(y)
%\end{equation*} and proved that it admits infinitely many rational solutions with a bounded denominator only when certain specific identity hold (see \cite{fuchs2021diophantine}).
	
The theory of functional decomposition $f = f_1 \circ \cdots \circ f_m$ of a complex polynomial $f$ into indecomposable complex polynomials $f_1,\ldots, f_m$ was developed by J. F. Ritt \cite{ritt}, where a complex polynomial $f$ is said to be
indecomposable if $\deg f > 1$ and it cannot be represented as a composition of two lower
degree polynomials. Several authors have investigated the structure and properties of decomposable polynomial. The decomposability among terms of second-order linear recurrence sequences has been studied under some assumptions (see \cite{fuchs2019decomposable,kreso2022decompositions}). The complete decomposition of linear combinations
of the Bernoulli and Euler polynomial has been studied (see \cite{pinter, pinter2022, pinter2024}).
	
In this paper, firstly, we extend a result of Fuchs and Heintze \cite{fuchs2021diophantine} to the case of polynomial sequences defined by a polynomial recurrence relation over a number field. Secondly, we extend the main result of \cite{fuchs2019decomposable}  for a third order polynomial recurrence. Further, we study the decomposition properties of
sum of two terms of a binary recurrence sequences of polynomials. In Section \ref{sec:notation}, we give the required notation and state our main results. In Section \ref{sec:auxiliary}, we describe finiteness criterion proposed by Bilu and Tichy, which is essential to prove one of our result. We define projective height over a function field and state some properties of height functions. Also, we state important results like Brownawell-Maser theorem, Castelnuovo's inequality, and Riemann's inequality  which are essential to prove main theorems. Finally, in Section \ref{sec:proof}, we prove main theorems. Our proof closely follow the proofs in \cite{fuchs2021diophantine} and \cite{fuchs2019decomposable}.

	\begin{comment} 
	 The \emph{Skolem-Mahler-Lech} theorem states that a nondegenerate linear recurrence sequence has finite zero multiplicity over a number field $K$ \cite{lech1953note,meh,skolem}. In a broader sense,  Schlickewei \cite{schlickewei1996multiplicities} has investigated the $a$-multiplicity of a linear recurrence sequences, where $a\in K$. Another study involves the finiteness of the indices $(n,m)$ of the equation
	\begin{equation}
		U_n=V_m,
	\end{equation} where $(U_n)_{n\geq 0},(V_m)_{m\geq 0}$ are linear recurrences (see, \cite{laurent1987equations,laurent1989equations,schlickewei1995intersection}). Similar results has been studied by many authors in function field. In \cite{Fuchs2005}, Fuchs and Peth\H{o} considered the Diophantine equation of the type 
	\begin{equation}\label{eq20}
		G_n =H_m,
	\end{equation} where $(G_n)_{n\geq 0}$ and $(H_m)_{m\geq 0}$ are linear recurrence sequences in the function field $L$. 
	In particular, they proved under certain conditions that there exists an effectively computable constant $C$ such that $\max (n, m)\leq C$ for all $(n, m)\in \N^2$ satisfying \eqref{eq20}, unless there are infinitely many solutions coming from a trivial identity.
\end{comment}

	\section{Notation and results} \label{sec:notation}
	Let $K$ be a number field, $\mathcal{O}_S$ be the ring of integers of $K$ and $(U_n(x))_{n\geq 0}$ be the recurrence sequence of polynomials of order $d\geq2$ defined by
	\begin{equation}\label{maineq}
	U_n(x)=a_1(x)\alpha_1(x)^n+\cdots+a_d(x)\alpha_d(x)^n,
	\end{equation}
	where $a_1(x),\ldots,a_d(x)\in K[x]$ and the polynomial characteristic roots $\alpha_1(x),\ldots,\alpha_d(x)\in K[x]$. Again, we rewrite $a_1(x)\alpha_1(x)^n$ as follows:
    \begin{equation}\label{eqalpha}
        a_1(x)\alpha_1(x)^n=\eta_{\ell}x^{\ell}+\eta_{\ell-1}x^{\ell-1}+\eta_{\ell-2}x^{\ell-2}+\cdots+\eta_{0},
    \end{equation} where $\ell=n\deg \alpha_1+\deg a_1$ and $\eta_i\in K, 0\leq i\leq \ell$. Furthermore, we assume the following:
    \begin{itemize}
        \item[(i)] $U_n(x)$ satisfy the dominant root condition, that is $\deg \alpha_1>\max_{i=2,\ldots,d}\deg \alpha_i$ and having at most one constant characteristic root.
        \item[(ii)] $\deg a_1\geq \max_{i=2,\ldots,d}\deg a_i$.
        \item[(iii)] $U_n(x)$ cannot be written in the form $\hat{a_1}\hat{\alpha_1}(x)^n+\hat{a_2}\hat{\alpha_2}^n$ for $\hat{\alpha_1}(x)\in K[x]$ a perfect power of a linear polynomial and $\hat{a_1}, \hat{a_2},\hat{\alpha_2}\in K$.
        \item[(iv)] The coefficients $\eta_{\ell-1}$ and $\eta_{\ell-2}$ in \eqref{eqalpha} are zero.
    \end{itemize}
     We will refer to the assumptions (i)-(iv) by saying $	U_n(x)=a_1(x)\alpha_1(x)^n+\cdots+a_d(x)\alpha_d(x)^n$ {\it n-th polynomial in a linear recurrence sequence of the desired structure.}

     \begin{remark}
         It is possible to choose the coefficients of $a_1(x)$ so that the assumption (iv) holds. For that, let $n=3$, $a_1(x)=b_0+b_1x+b_2x^2$, $\alpha_1(x)=c_0+c_1x+c_2x^2$, where $b_i,c_i\in K, i=0,1,2, b_2,c_2\neq 0$. Then 
         \begin{align*}
             \alpha_1(x)^3=c_0^3&+3c_0^2c_1x+(3c_0c_1^2+3c_0^2c_2)x^2+(6c_0c_1c_2+c_1^3)x^3\\&+(3c_0c_2^2+3c_1^2c_0)x^4+3c_1c_2^2x^5+c_2^3x^6.
         \end{align*}
         Let 
         \begin{equation}\label{eqrem1}
             a_1(x)\alpha_1(x)^3=\sum_{i=0}^{8}\eta_ix^i.
         \end{equation} Observe that $\ell=8$. We now determine the conditions under which the coefficients $\eta_7$ and $\eta_6$ are zero. From \eqref{eqrem1} 
         \begin{equation*}
             \eta_7=b_1c_2^3+3b_2c_1c_2^2=0
         \end{equation*} implies 
         \begin{equation*}
             b_1=\frac{-3b_2c_1}{c_2}
         \end{equation*} and 
         \begin{equation*}
             \eta_6=b_0c_2^3+b_1(3c_1c_2^2)+b_2(3c_0c_2^2+3c_1^2c_2)=0 
         \end{equation*} implies 
         \begin{equation*}
             b_0=\frac{3b_2(2c_1^2-c_0c_2)}{c_2^2}. 
         \end{equation*} For simplification and verification, we use \textit{Mathematica} to compute the general formulas for the coefficients and determine the conditions under which they vanish. The code is written specifically for the given degrees of $a_1(x)$ and $\alpha_1(x)$, and the value of $n$.
 
     \end{remark}
	
	We say that a polynomial $f$ of degree at least 2 is decomposable if it can be written as $f=g\circ h$, where $g$ and $h$ are polynomials with degrees both at least 2. If no such representation exists, then $f$ is said to be indecomposable. Let $S$ be a finite set of places of $K$ containing all Archimedean places. We denote by $\mathcal{O}_S$ the ring of $S$-integers of the field $K$. We say that the equation $f(x)=g(y)$ has infinitely many solutions with bounded $\mathcal{O}_S$-denominator if there exists $\delta\in \mathcal{O}_S$ such that $f(x)=g(y)$ has infinitely many solutions $(x,y)\in K\times K$ with $\delta x,\delta y\in \mathcal{O}_S$.

    The first result of this paper is stated as follows.
	
\begin{theorem}\label{thm1}
Let $K$ be the number field, $S$ be a finite set of places of $K$ containing all Archimedean places, $(U_n(x))_{n\geq 0}$ and $(V_m(y))_{m\geq 0}$  be linear recurrence sequences of the desired structure with power sum representation  $U_n(x)=a_1(x)\alpha_1(x)^n+\cdots+a_d(x)\alpha_d(x)^n$ and $V_m(y)=b_1(y)\beta_1(y)^m+\cdots+b_s(y)\beta_s(y)^m$ respectively, where $a_i(x),$ $b_j(x),\alpha_i(x),\beta_j(x)\in K[x]$. If $n,m\geq 3$ and $U_n(x)$ is indecomposable, then 
\begin{equation}\label{thmeq}
			U_n(x)=V_m(y)
\end{equation} has infinitely many solutions $(x,y)\in K\times K$ with a bounded $\mathcal{O}_S$-denominator if and only if there exists a polynomial $Q(y)\in K[y]$ such that $V_m(y)=U_n(Q(y))$ holds identically.

Moreover, if $V_m(y)$ is also indecomposable, then in the above statement we can restrict $Q(y)$ to be linear.
\end{theorem}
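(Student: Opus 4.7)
The plan is to apply the Bilu--Tichy criterion together with the structural constraints (i)--(iv) on $U_n$ and $V_m$. The \emph{if} direction is immediate: if $V_m(y)=U_n(Q(y))$ holds as polynomials in $K[y]$, then $(x,y)=(Q(y),y)$ is a solution for every $y\in K$, with $\mathcal{O}_S$-denominator bounded by the common denominator of the coefficients of $Q$.

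For the \emph{only if} direction, suppose $U_n(x)=V_m(y)$ has infinitely many solutions with bounded $\mathcal{O}_S$-denominator. By the Bilu--Tichy theorem there exist linear $\lambda_1,\lambda_2\in K[x]$, a polynomial $\phi\in K[x]$, and a \emph{standard pair} $(f^*,g^*)$ over $K$ such that
\[
U_n(x)=\phi\bigl(f^*(\lambda_1(x))\bigr),\qquad V_m(y)=\phi\bigl(g^*(\lambda_2(y))\bigr).
\]
Since $U_n$ is indecomposable of degree at least $2$, this factorization forces $\deg\phi=1$ or $\deg f^*=1$. In the subcase $\deg f^*=1$, set $\ell:=f^*\circ\lambda_1$; then $\ell$ is linear and $\phi=U_n\circ\ell^{-1}$, hence
\[
V_m(y)=U_n\bigl(\ell^{-1}(g^*(\lambda_2(y)))\bigr)=U_n(Q(y))
\]
with $Q:=\ell^{-1}\circ g^*\circ\lambda_2\in K[y]$, which is the asserted identity.

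The substantive step is to exclude the remaining subcase $\deg\phi=1$. Writing $\phi(t)=ct+d$ and $\widetilde{f}:=\phi\circ f^*$, $\widetilde{g}:=\phi\circ g^*$, we have $U_n=\widetilde{f}\circ\lambda_1$ and $V_m=\widetilde{g}\circ\lambda_2$, where $\widetilde{f}$ is a scalar-plus-constant modification of one of the five Bilu--Tichy standard-pair components: a pure power $x^q$ (with $g^*=ax^p h(x)^q$), the polynomial $x^2$ (with $g^*=(ax^2+b)h(x)^2$), a Dickson polynomial $D_q(x,\alpha)$, a scaled Dickson pair with $\gcd$-condition, or the exceptional pair involving $(ax^2-1)^3$ and $3x^4-4x^3$. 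I would rule out each of these by matching the top of $U_n$ against the top of $\widetilde{f}\circ\lambda_1$. Conditions (i) and (ii) pin the leading term of $U_n$ to $\mathrm{lc}(a_1)\,\mathrm{lc}(\alpha_1)^n\,x^{n\deg\alpha_1+\deg a_1}+\cdots$, and the vanishing of $\eta_{\ell-1},\eta_{\ell-2}$ in (iv) forces the two subleading coefficients of the candidate $\widetilde{f}\circ\lambda_1$ to vanish as well. For the pure-power and $(x^2,\ldots)$ pairs this pins $\alpha_1$ to be a perfect power of a linear polynomial with $a_i,\alpha_i$ constant for $i\geq 2$, contradicting (iii); for Dickson polynomials and the exceptional pair the classical formulas for the subleading coefficients of $D_q$ and of $(ax^2-1)^3$ are incompatible with the simultaneous vanishing in (iv) once $n,m\geq 3$. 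The case analysis parallels \cite{fuchs2021diophantine}, but the main technical obstacle is adapting every leading-coefficient comparison to accommodate non-constant $a_i$'s and $\alpha_i$'s.

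For the \emph{moreover} statement, suppose $V_m$ is also indecomposable. Since $V_m=U_n\circ Q$ with $\deg U_n\geq 2$, indecomposability forces $\deg Q=1$; otherwise $U_n\circ Q$ would be a nontrivial decomposition of $V_m$ into factors each of degree $\geq 2$.
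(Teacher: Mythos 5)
Your proposal is correct and follows essentially the same route as the paper: apply the Bilu--Tichy criterion, note that indecomposability of $U_n$ forces either $\deg\phi=1$ or $\deg f^{*}=1$, read off $V_m=U_n\circ Q$ in the latter case, and kill the former by ruling out each standard pair using the degree bound $\deg U_n,\deg V_m\geq 6$, indecomposability (which forces a prime Dickson index), and the vanishing of $\eta_{\ell-1},\eta_{\ell-2}$ against the nonvanishing subleading coefficients of $D_p(\lambda(x),r)$. One omission worth fixing: since $K$ is an arbitrary number field, Theorem \ref{biluthm} also produces the \emph{specific} pair $\bigl(D_k(x,r^{\ell/d}),\,-D_{\ell}(x\cos(\pi/d),r^{k/d})\bigr)$, which your case list skips entirely; the paper disposes of it by reducing to the third-kind Dickson argument. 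Minor differences of mechanism: the paper excludes the second and fifth kinds by the crude degree count (one coordinate has degree $2$ or $4$, below $6$) rather than by subleading coefficients, and the fourth kind because its even index contradicts indecomposability, so your proposed coefficient comparisons there are unnecessary.
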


Let $\C[x]$ be the polynomial in the indeterminate $x$. In the rest of the paper, we are interested in subsets of $\C[x]$, that is, $\{W_n(x): n\in \N\}$ that consist of elements of a simple non-degenerate third order linear recurrence sequence $(W_n(x))_{n\geq 0}$ given by
\begin{equation}\label{recrel}
  W_{n+3}(x)=a(x)W_{n+2}(x)+b(x)W_{n+1}(x)+c(x)W_{n}(x)\quad \text{for } n\geq 0,  
\end{equation}
with $a(x),b(x),c(x),W_0(x),W_1(x),W_2(x)\in \C[x]$.
We denote $\lambda_1(x),\lambda_2(x),\lambda_3(x)$ as the roots of the characteristic polynomial 
\[T^3-a(x)T^2-b(x)T-c(x).\] Setting $S=T-\frac{1}{3}a(x)$ the characteristic polynomial becomes \[S^3-p(x)S-q(x),\] where
\[
p(x)=\tfrac{1}{3}a(x)^2+b(x), \quad 
q(x)=\tfrac{2}{27}a(x)^3+\tfrac{1}{3}a(x)b(x)+c(x).
\]
Let
\begin{align*}
    \mathfrak{D}(x) =& \left( \frac{q(x)}{2} \right)^2 - \left( \frac{p(x)}{3} \right)^3\\
      =& \tfrac{1}{27}a(x)^3c(x)-\tfrac{1}{108}a(x)^2b(x)^2+\tfrac{1}{6}a(x)b(x)c(x)+\tfrac{1}{4}c(x)^2-\tfrac{1}{27}b(x)^3.
\end{align*}
Moreover, let
\[
u(x)=\sqrt[3]{\frac{q(x)}{2}+\sqrt{\mathfrak{D}(x)}}, \qquad 
v(x)=\sqrt[3]{\frac{q(x)}{2}-\sqrt{\mathfrak{D}(x)}}.
\]
Then we have by Cardano’s formulae
\begin{align}
\lambda_1(x) &= u(x)+v(x)+\tfrac{1}{3}a(x), \label{eqt2.6} \\
\lambda_2(x) &= -\frac{u(x)+v(x)}{2}+ i\sqrt{3}\,\frac{u(x)-v(x)}{2}+\tfrac{1}{3}a(x), \label{eqt2.7}\\
\lambda_3(x) &= -\frac{u(x)+v(x)}{2}- i\sqrt{3}\,\frac{u(x)-v(x)}{2}+\tfrac{1}{3}a(x). \label{eqt2.8}
\end{align}
Since $(W_n(x))_{n\geq 0}$ is simple, we have for $n\geq 0$
\begin{equation}\label{eeq1}
W_n(x)=w_1(x)\lambda_1(x)^n+w_2(x)\lambda_2(x)^n+w_3(x)\lambda_3(x)^n,  
\end{equation}
where
\[
w_1(x),w_2(x),w_3(x)\in K(i\sqrt{3})\left( x,\sqrt{\mathfrak{D}(x)},u(x),v(x)\right).
\]

 A polynomial $h$ is said to be \emph{cyclic} if 
    \[
    h(x) = \ell_1(x) \circ x^n \circ \ell_2(x)
    \]
    for some $n \geq 2$ and linear polynomials $\ell_1, \ell_2\in \mathbb{C}[x]$. A polynomial $h$ is said to be \emph{dihedral} if 
    \[
    h(x) = \ell_1(x) \circ T_n(x) \circ \ell_2(x)
    \]
    for some $n \geq 3$ and linear polynomials $\ell_1, \ell_2\in \C [x]$, where $T_n(x)$ is the $n$-th Chebyshev polynomial of the first kind, defined by the recurrence
\[
T_{n+2}(x) = 2xT_{n+1}(x) - T_n(x), \quad T_0(x) = 1, \; T_1(x) = x, \quad \text{for } n\in \N.
\]
It is well known that $T_{mn}=T_m\circ T_n$ for any $m,n\in \N$. Assume that $W_n$ is decomposable for some $n\in \N$ and write $W_n(x)=g(h(x))$, where $h$ is indecomposable, and thus $\deg h\geq 2$. By Gauss's lemma it follows that the polynomial $h(T)-h(x)\in \C(h(x))[T]$ is irreducible. Since $\deg h\geq 2$, there exists a root $y\neq x$ in its splitting field over $\C(h(x))$. Clearly, $h(x)=h(y)$.
%where $\lambda_1, \lambda_2, $ are the distinct roots of the characteristic polynomial of $(u_n(x))$ in its splitting field $L_1/\C(x)$, and $\sigma_1,\sigma_2 \in L_1$.
Conjugating over $\C(h(x))$ via $x\mapsto y$, we get a sequence $(W_n(y))_{n\geq0}$ with $W_n(y)\in \C[y]$, which satisfies the same non-degenerate simple recurrence relation as $(W_n(x))_{n\geq 0}$ with $x$ replaced by $y$. We note that
	\begin{equation*}
		W_n(y)=t_1\mu_1^n+t_2\mu_2^n+t_3\mu_3^n,
	\end{equation*} where $t_1,t_2,t_3\in K(i\sqrt{3})(y,\sqrt{\mathfrak{D}(y)},u(y),v(y)) $ and $\mu_1,\mu_2,\mu_3$ are distinct roots of the characteristic polynomial of $(W_n(y))$ in its splitting field over $\C(y)$. Since $h(x)=h(y)$, we get $W_n(x)=W_n(y)$, that is
\begin{equation}\label{thm2eq1}
w_1\lambda_1^n+w_2\lambda_2^n+w_3\lambda_3^n=t_1\mu_1^n+t_2\mu_2^n+t_3\mu_3^n.
\end{equation}
To state our next result, we need the following assumption, that is,
\begin{equation}\label{eqval13}
        \nu\left(\frac{u}{v}\right)\neq 0\quad \text{with } \max \{\nu(u), \nu(v)\}>0,
    \end{equation} and
    \begin{equation}\label{eqval14}
        \nu(\mu_1)\nu(\mu_2)<0 \quad and \quad\nu\left(\frac{u+v}{a}\right)\neq 0 \quad\text{with } \max\{\nu(u+v),\nu(a)\}>0.
    \end{equation}

\begin{theorem}\label{2thm}
    Let $(W_n(x))_{n\geq0}$ be a sequence of polynomials defined as in \eqref{eeq1}. Assume that $3\deg a>\deg c, 2\deg a>\deg b$ and $\deg a+\deg c>2\deg b$ and for all valuations $\nu$, \eqref{eqval13} and \eqref{eqval14} hold. Then there exists a positive real constant $C=C(\{a,b,c,W_i: i=0,1,2\})$ such that if for some $n$ we have 
     \[W_n(x)=g(h(x)),\]
     where $h$ is indecomposable and neither cyclic nor dihedral, and if \eqref{thm2eq1} has no proper vanishing subsum, then $\deg g \leq C$.
\end{theorem}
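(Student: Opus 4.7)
The plan is to adapt the function-field/Brownawell--Masser strategy of \cite{fuchs2019decomposable} to the present third-order setting, using the explicit Cardano description \eqref{eqt2.6}--\eqref{eqt2.8} together with the degree and valuation hypotheses. Throughout I work inside the algebraic function field $M$ obtained from $\C(x,y)$ (where $y$ is a conjugate of $x$ over $\C(h(x))$ as in the paragraph preceding the theorem) by adjoining $i\sqrt{3}$, $\sqrt{\mathfrak{D}(x)}$, $\sqrt{\mathfrak{D}(y)}$, and the cube roots $u(x), v(x), u(y), v(y)$, so that all six quantities $\lambda_i(x)$, $\mu_j$ and the coefficients $w_i, t_j$ of \eqref{thm2eq1} belong to $M$.

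\textbf{Genus and valuation control.} Because $h$ is indecomposable and neither cyclic nor dihedral, the classical Ritt theorems force the monodromy group of $h$ to be primitive and not of cyclic or dihedral type; in particular the factor $h(y)-h(x)$ of $h(T)-h(x)\in \C(h(x))[T]$ is irreducible over $\C(x)$, and Castelnuovo's inequality applied to the curve $h(x)=h(y)$ (with the diagonal removed) yields a bound of the form $g(\C(x,y)) \leq C_1 \deg h$. The subsequent extensions used to form $M$ have degree bounded only in terms of $a,b,c$, and another application of Riemann's inequality gives $g(M) \leq C_2 \deg h$ with $C_2=C_2(a,b,c)$. The degree hypotheses $3\deg a>\deg c$, $2\deg a>\deg b$, $\deg a+\deg c>2\deg b$ force the Cardano expressions to have $\deg\lambda_1$ bounded below by a positive constant $c_0=c_0(a,b,c)$, while \eqref{eqval13} and \eqref{eqval14} exclude cancellations in $u\pm v$, $u/v$ and $\mu_1/\mu_2$; tracking the poles and zeros of the $\lambda_i$ and $\mu_j$ through Cardano's formulas then produces a finite set $S$ of valuations of $M$ outside which the $\lambda_i, \mu_j$ are units, with $|S|\leq C_3 \deg h$.

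\textbf{Bounding $n$ via Brownawell--Masser.} Since \eqref{thm2eq1} has no proper vanishing subsum by hypothesis, dividing through by, say, $w_1\lambda_1^n$ presents \eqref{thm2eq1} as a non-degenerate $S$-unit relation among six terms in $M$. The Brownawell--Masser theorem then delivers
\[
n\cdot\deg\lambda_1 \;\leq\; C_4\bigl(|S|+g(M)\bigr)
\]
for an absolute $C_4$. Combining with the two linear-in-$\deg h$ bounds of the previous step and with $\deg\lambda_1\geq c_0$, I get $n\cdot\deg\lambda_1 \leq C_5\deg h$. Since $\deg W_n = n\deg\lambda_1+O_{a,b,c,W_0,W_1,W_2}(1)$ and $\deg g=\deg W_n/\deg h$, this bounds $\deg g$ by a constant $C=C(\{a,b,c,W_i:i=0,1,2\})$, as required.

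\textbf{Main obstacle.} The essential difficulty lies in the valuation bookkeeping: one must verify that the cancellations built into $u$, $v$ and $\sqrt{\mathfrak{D}}$ never enlarge the support $S$ beyond linear growth in $\deg h$, and that the primitivity consequences of ``$h$ indecomposable and neither cyclic nor dihedral'' translate through Castelnuovo into the genus bound $g(M)\leq C_2\deg h$. The hypotheses \eqref{eqval13} and \eqref{eqval14} are precisely tailored to prevent the pathological cancellations that would otherwise spoil the Brownawell--Masser estimate; once this valuation analysis is carried through uniformly, the remaining pieces assemble mechanically.
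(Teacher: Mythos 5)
Your overall strategy is the paper's: rewrite \eqref{thm2eq1} as a non-degenerate $S$-unit equation in a suitable function field, apply Brownawell--Masser, and control $|S|$ and the genus via valuation counting and Castelnuovo. However, there is a concrete quantitative gap in the middle of your argument. You claim $g(\C(x,y))\leq C_1\deg h$ and $g(M)\leq C_2\deg h$, and your final deduction ($n\cdot\deg\lambda_1\leq C_5\deg h$, hence $\deg g=\deg W_n/\deg h=O(1)$) depends on this linearity. But the genus of the curve $h(x)=h(y)$ (diagonal removed) is \emph{quadratic} in $\deg h$: by Proposition~\ref{nlemma1} it is only bounded by $(d-1)(d-2)/2$ with $d=[\C(x,y):\C(x)]\leq\deg h-1$, and Castelnuovo/Riemann for the compositum likewise gives $(k_1-1)(k_2-1)$ with $k_i$ of order $\deg h$. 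With the correct bound $g(M)=O(\deg h^2)$, your inequality $n\cdot\deg\lambda_1\leq C_4(|S|+g(M))$ only yields $n=O(\deg h^2)$ and hence $\deg g=O(\deg h)$, which is not the assertion of the theorem.

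The repair is precisely the normalization you elided: the left-hand side of the Brownawell--Masser estimate is the projective height over $M$, not a literal degree, and the height of a ratio such as $\mu_1/\mu_3$ over $M$ carries a factor $[M:\C(x)]\geq\tfrac12\deg h$. This is exactly where the hypotheses \eqref{eqval13} and \eqref{eqval14} are needed in substance, not just rhetorically: via the equality cases of the height inequalities (Lemma~\ref{lemequality}) they give a lower bound $\mathcal{H}(\mu_1/\mu_3)\geq C\,\mathcal{H}(y)$ with $\mathcal{H}(y)=[M:\C(x)]$, preventing the cancellations in $u\pm v$ and $\mu_1/\mu_2$ that could make this height degenerate. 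With that extra factor of $\deg h$ on the left, the quadratic genus term is harmless: one gets $n=O(\deg h)$, then $\mathcal{H}(W_n(x))=O(\deg h^2)$ measured over $M$, and comparing with the lower bound $\mathcal{H}(W_n(x))=\deg g\cdot\deg h\cdot[M:\C(x)]\geq\tfrac12\deg g\cdot\deg h^2$ cancels both powers of $\deg h$ and bounds $\deg g$ by a constant. As written, your two errors (linear genus, missing $[M:\C(x)]$ factor) happen to compensate each other numerically, but neither intermediate step is correct, and the crucial lower bound on $\mathcal{H}(\mu_1/\mu_3)$ --- the actual role of \eqref{eqval13} and \eqref{eqval14} --- is never established.
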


For our final result, we consider a simple and non-degenerate binary recurrent sequence $(u_n(x))_{n\geq0}$ satisfying
\begin{equation}\label{eq2.21}
		u_{n+2}(x)=A_1(x)u_{n+1}(x)+A_0(x)u_n(x), \quad n\geq 0,
\end{equation} with $u_0(x),u_1(x),A_0(x),A_1(x)\in \C[x]$. By $\alpha_1(x), \alpha_2(x)$, we denote the roots of the corresponding characteristic polynomial
\[T^2-A_1(x)T-A_0(x)\]
and $D(x) = \sqrt{A_1(x)^2+4A_0(x)}$. Since the sequence $(u_n(x))_{n\geq 0}$ is simple, we have that $D(x)\neq 0$. Then for any $n\geq0$,
\begin{equation}\label{eq5.9}
		u_n(x)=\sigma_1\alpha_1^n+\sigma_2\alpha_2^n,
\end{equation}
where 
\begin{equation*}\label{eq23}
\alpha_1 = \frac{A_1(x) - \sqrt{A_1(x)^2 + 4A_0(x)}}{2}, \qquad
\alpha_2 = \frac{A_1(x) + \sqrt{A_1(x)^2 + 4A_0(x)}}{2}
\end{equation*} and 
\begin{equation*}\label{eq23a}
    \sigma_1= \frac{u_{1}(x)-u_{0}(x)\alpha_{2}}{\alpha_{1}-\alpha_{2}}, \qquad  \sigma_2= \frac{u_{0}(x)\alpha_1-u_{1}(x)}{\alpha_{1}-\alpha_{2}}.
\end{equation*} 
Assume that $u_{n_1}+u_{n_2}$ is decomposable for some $n_1,n_2\in \N$ and write $u_{n_1}(x)+u_{n_2}(x)=g(h(x))$, where $h$ is indecomposable, and thus $\deg h\geq 2$. By arguing as in the third order recurrence sequence, we have that
\begin{equation*}
u_n(x)=\sigma_1\alpha_1^n+\sigma_2\alpha_2^n,
\end{equation*} and 
\begin{equation*}
u_n(y)=\delta_1\beta_1^n+\delta_2\beta_2^n,
\end{equation*}
where $\alpha_1, \alpha_2$ are the distinct roots of the characteristic polynomial of $(u_n(x))$ in its splitting field $L_1/\C(x)$, and $\sigma_1,\sigma_2 \in L_1$ and $\beta_1, \beta_2$ are distinct roots of the characteristic polynomial of $(u_n(y))$ in its splitting field $L_2/\C(y)$, and $\delta_1, \delta_2\in L_2$. Since $h(x)=h(y)$, we get $u_{n_1}(x)+u_{n_2}(x)=u_{n_1}(y)+u_{n_2}(y)$, that is
\begin{equation}\label{eq5.8}
\sigma_1\alpha_1^{n_1}+\sigma_2\alpha_2^{n_1}+\sigma_1\alpha_1^{n_2}+\sigma_2\alpha_2^{n_2}=\delta_1\beta_1^{n_1}+\delta_2\beta_2^{n_1}+\delta_1\beta_1^{n_2}+\delta_2\beta_2^{n_2}.
	\end{equation}
Our third result is as follows.
\begin{theorem}\label{thm2}
	 Let $(u_n(x))_{n\geq 0}$ be a sequence of polynomials defined as in \eqref{eq5.9}. There exists a positive real constant $C=C(\{a_i, u_i: i=0,1\})$ such that if for some $n_1\leq  n_2$ we have 
     \[u_{n_1}(x)+u_{n_2}(x)=g(h(x)),\]
     where $h$ is indecomposable and neither cyclic nor dihedral, and if \eqref{eq5.8} has no proper vanishing subsum, then $\deg g \leq C$.
	\end{theorem}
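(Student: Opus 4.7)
The plan is to adapt the strategy used for Theorem~\ref{2thm} to the binary setting, where now identity \eqref{eq5.8} involves eight summands rather than six. I first pass to a function field $F$ containing both splitting fields $L_1$ and $L_2$, in which all of $\sigma_i,\alpha_i,\delta_j,\beta_j$ and the radicals $D(x),D(y)$ live; the relation $h(x)=h(y)$ with the diagonal removed determines an algebraic function field whose genus and places have to be controlled.

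When $n_1=n_2$ the statement reduces to the setting treated in \cite{fuchs2019decomposable}, so assume $n_1<n_2$. Grouping the summands in \eqref{eq5.8} rewrites the identity as
\begin{equation*}
\sigma_1\alpha_1^{n_1}(1+\alpha_1^{n_2-n_1}) + \sigma_2\alpha_2^{n_1}(1+\alpha_2^{n_2-n_1}) = \delta_1\beta_1^{n_1}(1+\beta_1^{n_2-n_1}) + \delta_2\beta_2^{n_1}(1+\beta_2^{n_2-n_1}).
\end{equation*}
Choosing a finite set $S$ of valuations of $F$ containing all poles and zeros of the $\alpha_i,\beta_j$ and of $D(x),D(y)$ renders each of the eight terms of \eqref{eq5.8} an $S$-unit up to the fixed factors $\sigma_i,\delta_j$. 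Combined with the hypothesis that no proper subsum of \eqref{eq5.8} vanishes, dividing through by one of the summands places the situation in the framework of the Brownawell--Masser inequality, yielding an upper bound on the height with respect to $F$ of any ratio of summands in terms of $|S|$ and the genus $\mathfrak{g}(F)$.

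The second step is to lower-bound, in terms of $n_2$, the heights of selected non-trivial ratios (for example $\alpha_1^{n_1}\beta_j^{-n_1}$ or $(\alpha_1/\alpha_2)^{n_1}$) by exploiting valuations at which the roots have poles or zeros of distinct orders. The hypothesis that $h$ is indecomposable and neither cyclic nor dihedral enters here through the Ritt-type description of the places of $F$ above the place at infinity of $\C(h(x))$, which rules out a trivial multiplicative dependence among $\alpha_1,\alpha_2,\beta_1,\beta_2$ and forces these heights to grow linearly in $n_2$. Together with the Brownawell--Masser upper bound this yields an inequality of the form $n_2\le C_1(\mathfrak{g}(F)+|S|)$.

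The main obstacle, as in the third-order case, is to control $\mathfrak{g}(F)$ and $|S|$ purely in terms of $\deg h$. I would invoke Castelnuovo's inequality to bound the genus of the compositum of all the extensions needed to express $\sigma_i,\alpha_i,\delta_j,\beta_j$ by a linear function of the genera of the individual extensions and of $\deg h$, and Riemann's inequality to bound the genus of the curve $h(x)=h(y)$ linearly in $\deg h$. The assumption that $h$ is neither cyclic nor dihedral is precisely what guarantees, via Ritt's classification, that the affine curve defined by $h(x)=h(y)$ with the diagonal removed is irreducible of positive genus, so that all the ingredients combine to yield $n_2\le C_2\deg h$ for a constant $C_2$ depending only on $u_0,u_1,A_0,A_1$. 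Finally, the degree identity $\deg g\cdot\deg h = \deg\bigl(u_{n_1}(x)+u_{n_2}(x)\bigr) = n_2\max(\deg\alpha_1,\deg\alpha_2)+O(1)$ converts this into the desired absolute bound $\deg g\le C$.
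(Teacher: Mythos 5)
Your skeleton matches the paper's: normalize the eight-term identity \eqref{eq5.8} by one summand, apply Brownawell--Masser in the compositum $F=\C(x,y,\alpha_1,\alpha_2,\beta_1,\beta_2)$, control $\mathfrak{g}(F)$ and $|S|$ via Castelnuovo and the degree bound $[\C(x,y):\C(x)]\le\deg h-1$, and finish with a height/degree comparison. However, your quantitative chain has a genuine gap. Castelnuovo's inequality contains the cross term $(k_1-1)(k_2-1)$ with $k_i=O(\deg h)$, so $\mathfrak{g}(F)=O(\deg h^2)$, not linear in $\deg h$; hence your intermediate inequality $n_2\le C_1(\mathfrak{g}(F)+|S|)$ only gives $n_2=O(\deg h^2)$, and feeding that into $\deg g\cdot\deg h=n_2\deg\alpha+O(1)$ yields $\deg g=O(\deg h)$, which is not a bound. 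What is actually needed (and what the paper isolates as \eqref{eq5.10}) is $n_2\le\big(\mathcal{H}(p_3)+\mathcal{H}(r_3)\big)\mathcal{H}(\beta_1/\beta_2)^{-1}$ together with a lower bound $\mathcal{H}(\beta_1/\beta_2)\ge c\,\mathcal{H}(y)=c\,[F:\C(x)]\asymp\deg h$ with $c>0$ depending only on the recurrence data. That extra factor of $\deg h$ in the denominator is precisely what turns $O(\deg h^2)$ into $O(\deg h)$ and hence $\deg g$ into $O(1)$; your proposal gestures at lower-bounding heights of ratios but never produces a bound proportional to $[F:\C(x)]$, and the inequality you do state is inconsistent with the conclusion you draw from it.

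Secondly, the hypotheses are misattributed. The fact that $\mathcal{H}\big((\beta_1/\beta_2)^{n_2}\big)$ grows linearly in $n_2$ with a nonzero slope comes from non-degeneracy of the recurrence ($\alpha_1/\alpha_2,\ \beta_1/\beta_2\notin\C$), not from any property of $h$. The assumption that $h$ is indecomposable and neither cyclic nor dihedral enters elsewhere: via Proposition \ref{lem3.3} it gives $\C(x)\cap\C(y)=\C(h(x))$, and via \cite[Lemma 3]{fuchs2019decomposable} it gives $[\C(x,y):\C(x)]\ge\tfrac12\deg h$, which is used both in the genus/$|S|$ estimates and in the final lower bound $\mathcal{H}(g(h(x)))=\deg g\,\deg h\,[F:\C(x)]\ge\tfrac12\deg g\,\deg h^2$. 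Your claim that the curve $h(X)=h(Y)$ with the diagonal removed is "irreducible of positive genus" is neither needed nor correct as stated: only an upper bound on the genus of $\C(x,y)$ is used (Proposition \ref{nlemma1}), and that genus may well be zero. Finally, note that the necessity of excluding dihedral $h$ is witnessed by the Chebyshev identity in the paper's remark; your proposed mechanism does not explain why dihedral $h$ would break the argument, which is a sign that the hypothesis is not doing the work you assign to it.
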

In the following remarks we show that if $h$ is cyclic or dihedral, then the $\deg g$ is not bounded.
    \begin{remark}
        For $m,n,k \in \mathbb{Z}_{\geq 0}$, we have 
        \begin{equation}\label{remeq1}
            T_{nk}(x)\,T_{mk}(x) \;=\; \frac{T_{nk+mk}(x)+T_{nk-mk}(x)}{2}.
        \end{equation}
Using the identity $T_{mn}(x)=T_m \circ T_n(x)$, we rewrite \eqref{remeq1} as  
\[
T_{nk+mk}(x)+T_{nk-mk}(x)=2\,(T_n \circ T_k)(x)\,(T_m \circ T_k)(x) 
\;=\; 2\,(T_n T_m)\circ T_k(x)  .
\]
It follows that the degree of $T_k(x)$ necessarily depends on $k$. Therefore, if the polynomial $h(x)$ in Theorem~\ref{thm2} is of dihedral type, then the degree of $g$ cannot be bounded.
    \end{remark}

    \begin{remark}
        Let $u_n(x)=x^n+1$. Then, for any $m,n,k \in \mathbb{Z}_{\geq 0}$, we have  
\[
u_{mk}(x)+u_{nk}(x) = x^{mk}+1+x^{nk}+1 = (x^m+x^n+2)\circ x^k.
\]  
Note that $x^k$ is a cyclic polynomial whose degree varies with $k$. Hence, the assumption that $h(x)$ is not cyclic in Theorem \ref{thm2} is indeed necessary.
    \end{remark}

The following is an application to the Theorem \ref{thm2}. For that we consider another simple and non-degenerate sequence $(v_n(x))_{n\geq0}$ defined by
\begin{equation}\label{eq2.22}
		v_{n+2}(x)=B_1(x)v_{n+1}(x)+B_0(x)v_n(x), \quad n\geq 0,
\end{equation} where $v_0(x),v_1(x),B_0(x),B_1(x)\in \C[x]$.
	\begin{corollary}\label{corolary1}
	 Consider sequences $(u_n(x))_{n\geq 0}$ and $(v_n(x))_{n\geq 0}$ satisfying \eqref{eq2.21} and \eqref{eq2.22} respectively with $\deg u_n \geq \deg v_m$ for any $n,m\in \Z$. Then there exists a constant $C=C(\{A_i,u_i : i=0,1\})>0$ with the following property. If $(u_n(x))_{n\geq0}$, $(v_n(x))_{n\geq 0}$,  $u_{n_1}(x)+u_{n_2}(x)$, and $v_{m_1}(x)+v_{m_2}(x)$ are not composites of cyclic or dihedral polynomials for any $n_1,n_2,m_1,m_2 \geq 0$, and the equation 
     \begin{equation*}
         u_{n_1}(x)+u_{n_2}(x)=v_{m_1}(y)+v_{m_2}(y)
     \end{equation*} has infinitely many integer solutions $x,y$ and does not have proper vanishing subsums, then \[u_{n_1}(x)+u_{n_2}(x)=(v_{m_1}+v_{m_2})(\ell(x))\] for a linear $\ell(x)\in \C[x]$.
	\end{corollary}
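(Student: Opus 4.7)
The plan is to apply the Bilu--Tichy criterion to the polynomial Diophantine equation
\[
U(x) := u_{n_1}(x)+u_{n_2}(x) \;=\; v_{m_1}(y)+v_{m_2}(y) =: V(y).
\]
Since the equation has infinitely many integer solutions (hence certainly solutions with bounded $\mathcal{O}_S$-denominator), Bilu--Tichy furnishes a polynomial $\phi\in\C[t]$, linear polynomials $\lambda,\mu\in\C[t]$, and a standard pair $(F,G)$ over $\C$ with
\[
U \;=\; \phi\circ F\circ\lambda, \qquad V \;=\; \phi\circ G\circ\mu.
\]
The target is to show $\deg F=\deg G=1$; once this is achieved, $\ell := \mu^{-1}\circ G^{-1}\circ F\circ\lambda$ is a linear polynomial satisfying $U = V\circ\ell$, which is the desired conclusion. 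The degree hypothesis $\deg u_n\ge\deg v_m$ guarantees $\deg U\ge\deg V$, hence $\deg F\ge\deg G$, and pins down which side each member of a standard pair must occupy.

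Next I would inspect the list of standard pairs over $\C$ and eliminate each non-trivial case using the cyclic/dihedral hypothesis together with Theorem~\ref{thm2}. In the Dickson--Chebyshev pairs, both $F$ and $G$ are dihedral, so the decomposition of $U$ (or $V$) exhibits $u_{n_1}+u_{n_2}$ (respectively $v_{m_1}+v_{m_2}$) as a composite with a dihedral compositional factor, contradicting the hypothesis. In the pairs $(x^q,ax^r p(x)^q)$ and $(x^2,(ax^2+b)p(x)^2)$, a genuine monomial appears on one side, forcing the corresponding sum to be a composite with a cyclic factor, again contradicting the hypothesis; for the non-monomial member one uses Ritt's theorem to factor $ax^r p(x)^q \circ \mu$ into indecomposables and isolate an indecomposable compositional factor $h$ of $V$. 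If $h$ is cyclic or dihedral we contradict the hypothesis on $v_{m_1}+v_{m_2}$, whereas if not, Theorem~\ref{thm2} applied to the resulting identity $V = \tilde\phi\circ h$ produces a uniform bound $\deg\tilde\phi\le C$, which limits $q$ and ultimately forces the forbidden cyclic structure $x^q$ to appear on the $U$-side. The exceptional pair $((ax^2-1)^3,3x^4-4x^3)$ is handled analogously since $(ax^2-1)^3 = t^3\circ(ax^2-1)$ is visibly a composite of cyclic polynomials.

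The principal obstacle will be the case of the standard pair $(x^q,ax^r p(x)^q)$ with $0<r<q$, because the non-monomial member need not have any immediately visible cyclic or dihedral compositional factor. Here one must carefully combine Ritt's decomposition theorem with the indecomposable compositional factor bound supplied by Theorem~\ref{thm2}, while keeping track of the no-proper-vanishing-subsum hypothesis on \eqref{eq5.8} to ensure Theorem~\ref{thm2} is applicable. After all non-trivial standard pairs have been ruled out, only $\deg F=\deg G=1$ remains, and the linear relation $U = V\circ\ell$ follows as indicated above.
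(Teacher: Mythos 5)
Your overall strategy --- invoke Bilu--Tichy, use the degree hypothesis $\deg u_n\ge\deg v_m$ to orient the standard pair, and kill the pairs with both members nonlinear via the cyclic/dihedral assumption --- is the same as the paper's. The genuine gap sits exactly in the case you flag as the principal obstacle. In a first-kind pair $(x^{q},a x^{r}p(x)^{q})$ the monomial member may have degree $1$ (take $q=1$, $r=0$), in which case the other member is $a\,p(x)$ for a \emph{completely arbitrary} polynomial $p$: there is no residual ``forbidden cyclic structure $x^{q}$'' to force onto the $U$-side, so your claim that every nonlinear possibility can be excluded and hence $\deg F=\deg G=1$ is not substantiated. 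Relatedly, bounding the outer factor via Theorem \ref{thm2} cannot ``limit $q$'': that theorem bounds $\deg g$ while leaving $\deg h$ unconstrained, so it gives no control over the degree of the inner standard-pair member.

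The second problem is that you apply Theorem \ref{thm2} to the identity $V=\tilde\phi\circ h$, i.e.\ on the $v$-side. The theorem's hypotheses (no proper vanishing subsum of \eqref{eq5.8}) and its constant $C(\{A_i,u_i\})$ are tied to the $u$-sequence; the corollary grants neither the $v$-analogue of the subsum condition nor a constant depending on the $v$-data, and indeed its constant is asserted to depend only on $\{A_i,u_i\}$. What the paper does in this case is write $u_{n_1}(x)+u_{n_2}(x)=(v_{m_1}+v_{m_2})\bigl(\ell(f_1(\lambda(x)))\bigr)$, take $h$ to be an innermost indecomposable compositional factor (not cyclic or dihedral, by the hypothesis on $u_{n_1}+u_{n_2}$), and apply Theorem \ref{thm2} on the $u$-side to conclude $\deg(v_{m_1}+v_{m_2})<C$. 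Note that even the paper obtains only this degree bound in that case rather than the linear relation, so your stronger assertion that ``only $\deg F=\deg G=1$ remains'' overshoots what either argument actually delivers; the linear relation is obtained, as in the paper, only in the case $\deg f_1=\deg g_1=1$.
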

    \section{Auxiliary results} \label{sec:auxiliary}
    In this section,we collect some important results which we will need in our proofs. First we outline the finiteness criterion proposed by Bilu and Tichy in \cite{bilu2000diophantine}, for which we define the five types of standard pairs $(f(x),g(x))$.
	 
	Let $K$ be a number field and $a,b\in K^{*}, m,n\in \N,\,  p(x)\in K[x]$ be a non zero polynomial (which can be constant) and $D_n(x,r)$ be the $n$-th Dickson polynomial with parameter $r$ given by
	\[D_n(x,r)=\sum_{j=0}^{\left\lfloor \frac{n}{2}\right\rfloor}\frac{n}{n-j}\binom{n-j}{j}(-r)^jx^{n-2j}.\]
	Here $\left\lfloor \frac{n}{2}\right\rfloor$ denotes the largest integer $\leq \frac{n}{2}$.

    Using this notation we have the following five kinds of standard pairs (over $K$); in each of them the two coordinates can be switched. Standard pairs of polynomials over $K$ are listed in the following table.
    \begin{table}[h!]
    \centering
    \begin{tabular}{|l l l|}
    \hline
    \textbf{Kind} &
    \textbf{Standard pair (or switched)} &
    \textbf{Parameter restrictions} \\
    \hline

    First  &
    $\bigl(x^{m},\, a x^{n} p(x)^{m}\bigr)$ &
    $0\leq n < m,\ \gcd(n,m)=1,\ n+\deg p > 0$ \\[2mm]

    Second &
    $\bigl(x^{2},\, (a x^{2} + b)p(x)^{2}\bigr)$ &
    -- \\[2mm]

    Third  &
    $\bigl(D_{m}(x,a^{n}),\, D_{n}(x,a^{m})\bigr)$ &
    $\gcd(m,n)=1$ \\[2mm]

    Fourth &
    $\left(a^\frac{-m}{2} D_{m}(x,a),\, -b^\frac{-n}{2} D_{n}(x,b)\right)$ &
    $\gcd(m,n)=2$ \\[2mm]

    Fifth  &
    $\bigl((a x^{2}-1)^{3},\, 3x^{4}-4 x^{3}\bigr)$ &
    -- \\ \hline
    \end{tabular}
    \end{table}
   
    \begin{table}[h!]
    \centering
    \begin{tabular}{|l l|}
    \hline
    \textbf{}  \textbf{Specific pair (or switched)} &
    \textbf{Parameter restrictions} \\
    \hline
    $(D_m(x, r^{n/d}), -D_n(x\cos(\pi/d), r^{m/d}))$ &
    $d= \gcd(m,n) \geq 3, n, \cos(2\pi/d) \in K$ \\[2mm]
    \hline
    \end{tabular}
    \end{table}

Note that $-D_n(x\cos(\pi/d), r^{m/d}) \in K[x]$. The key result we use to prove Theorem \ref{thm1} is the following theorem due to Bilu and Tichy \cite[Theorem 10.5]{bilu2000diophantine}.
	\begin{theorem}\label{biluthm}
		Let $K$ be the number field, $S$ be a finite set of places of $K$ containing all Archimedean places, and $f(x),g(x)\in K[x]$. Then the following are equivalent.
		\begin{itemize}
			\item[(i)] The equation $f(x) = g(y)$ has infinitely many solutions with a bounded $\mathcal{O}_S$-denominator.
			\item[(ii)] We have $f=\phi \circ f_1\circ \lambda$ and $g=\phi\circ g_1\circ \mu$, where $\lambda(x),\mu(x)\in K[x]$ are linear polynomials, $\phi(x)\in K[x]$, and $(f_1(x),g_1(x))$ is a standard or specific pair over $K$ such that the equation $f_1(x) = g_1(y)$ has infinitely many solutions with a bounded $\mathcal{O}_S$-denominator.
		\end{itemize}
        (In particular if $K=\Q,$ then the word ``specific'' in $(ii)$ may be skipped.)
	\end{theorem}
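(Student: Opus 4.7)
The plan is to address the two directions of the equivalence separately, with essentially all of the substance concentrated in (i) $\Rightarrow$ (ii). For (ii) $\Rightarrow$ (i), one verifies case-by-case that each of the five kinds of standard pairs (and, when $K\neq \Q$, the specific pair) admits an explicit one-parameter polynomial family of solutions $(x(t),y(t))$ with $x(t),y(t)\in K[t]$: for the first kind one takes $y=t$ and reads off $x$ from the defining identity; for the Dickson kinds one uses $D_n(u+ru^{-1},r)=u^n+r^nu^{-n}$ and specializes $u$; and so on for the remaining kinds. Specializing $t$ to infinitely many elements of $\mathcal{O}_S$ produces infinitely many solutions with bounded $\mathcal{O}_S$-denominator, and both the outer composition with $\phi$ and the inner linear substitutions $\lambda,\mu$ preserve this property trivially.

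For the harder direction, let $C$ denote the affine curve $f(x)-g(y)=0$ over $K$. The hypothesis that $f(x)=g(y)$ has infinitely many solutions with bounded $\mathcal{O}_S$-denominator means (after possibly enlarging $S$ to absorb a common denominator) that $C$ has infinitely many $\mathcal{O}_S$-integral points. By Siegel's theorem on integral points of algebraic curves, at least one irreducible component $C_0$ of $C$ must have geometric genus zero and at most two points at infinity. One then extracts a common compositional factor from $f$ and $g$: write $f=\phi\circ f_1$ and $g=\phi\circ g_1$ where $\phi$ is a maximal left common factor in the sense of composition, chosen so that the reduced pair $(f_1,g_1)$ has no further non-trivial common left compositional factor. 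The component $C_0$ projects to a component of the smaller curve $f_1(x)=g_1(y)$ retaining the same genus-zero-with-few-infinities behavior.

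The core of the argument is then a classification: over $\bar K$, enumerate all coprime (in the compositional sense) pairs $(f_1,g_1)$ whose associated curve admits a component of genus zero with at most two points at infinity. This is carried out through a careful ramification analysis of the polynomial coverings $f_1,g_1\colon \mathbb{P}^1\to\mathbb{P}^1$ via the Riemann-Hurwitz formula, combined with Ritt's theorems on polynomial decomposition. The low-genus-few-infinities constraint translates into strong restrictions on the ramification types of the two covers; matching these with the possible monodromy groups of indecomposable polynomial maps (cyclic, dihedral, or primitive of a tightly restricted list of types) forces $(f_1,g_1)$, up to the linear changes absorbed into $\lambda$ and $\mu$, into exactly one of the five standard shapes. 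The specific pair appears precisely when the rationality of $\cos(2\pi/d)$ over $K$ permits a descent unavailable over $\Q$. A final Galois-descent argument brings the classification from $\bar K$ back to $K$.

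The principal obstacle is the classification step: translating the abstract geometric condition (genus zero, at most two points at infinity) into a finite, explicit list of ramification patterns, and then verifying that each pattern is realized by one of the tabulated pairs and by no others. This requires both Ritt's compositional theorems and a delicate group-theoretic analysis of the primitive permutation groups that arise as monodromy groups of polynomial covers. It is also precisely at this point that the role of the base field $K$ enters, accounting for the split between standard and specific pairs in the final statement.
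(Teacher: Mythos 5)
The paper does not prove this statement: it is quoted verbatim (as Theorem~10.5) from Bilu and Tichy's paper \emph{The Diophantine equation $f(x)=g(y)$}, and is used in this paper purely as an imported tool. So there is no internal proof to compare yours against. What you have written is a roadmap of the original Bilu--Tichy argument, and at the level of architecture it is accurate: Siegel's theorem reduces (i) to the existence of a component of $f(x)-g(y)=0$ of genus zero with at most two points at infinity, and the heart of the matter is the classification of pairs $(f_1,g_1)$ whose associated curve has such a component, carried out via Ritt's theorems, ramification data for the covers $\mathbb{P}^1\to\mathbb{P}^1$, and the monodromy analysis going back to Fried; the standard/specific dichotomy is indeed a rationality issue over $K$ versus $\Q$. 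But that classification step is the entire content of the theorem --- it occupies most of the Bilu--Tichy paper and rests on Bilu's earlier work on quadratic factors of $f(x)-g(y)$ and on Fried's reducibility theorems --- and you only name it as "the principal obstacle" without executing it. As it stands this is a correct description of how the proof goes, not a proof.

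Two smaller points. First, the direction (ii)~$\Rightarrow$~(i) is essentially trivial as the theorem is phrased: condition (ii) already \emph{assumes} that $f_1(x)=g_1(y)$ has infinitely many solutions with bounded $\mathcal{O}_S$-denominator, so one only needs to observe that pre-composing with $\lambda^{-1},\mu^{-1}$ and post-composing with $\phi$ preserves this; your case-by-case parametrization of solutions for each kind of standard pair is not needed for the equivalence (it is needed only when one wants to decide \emph{which} standard pairs actually satisfy that hypothesis, as in applications). Second, the reduction "extract a maximal common left compositional factor $\phi$" is a simplification: in the actual argument the decompositions $f=\phi\circ f_1\circ\lambda$, $g=\phi\circ g_1\circ\mu$ are produced from the distinguished genus-zero component via Fried's theorem on the factorization of $f(x)-g(y)$, not by a naive greatest-common-left-divisor construction, and making this precise is itself nontrivial.
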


      Let $L$ be a finite algebraic extension of function field $\C(x)$. For $a\in \C$ define the valuation $\nu_a$ as follows. For $q(x)\in \C(x)$ let $q(x)=(x-a)^{\nu_a(q)}A(x)/B(x)$, where $A,B\in \C[x]$ and $A(a)B(a)\neq 0$. Let $\nu_{\infty}$ be the infinite valuation which is defined by $\nu_{\infty}(Q):=\deg B-\deg A$ for $Q(x)=A(x)/B(x)$, where $A,B\in \C[x]$. These are all (normalized) discrete valuations on $\C(x)$. All of them can be extended in at most $[L:\C(x)]$ ways to a discrete valutaion on $L$ and again in this way one obtains all discrete valuations on $L$. Furthermore, for $f\in L^{*}$ the sum formula $\sum \nu(f)=0$ holds, where the sum is taken over all discrete valuations on $L$.  We just mention that there are different equivalent descriptions of the notion of discrete valuations as places or the rational points on any non-singular complete curve over $\C$ with function field $L$. 
      
      Now define the {\em projective height} $\mathcal{H}$ of $u_1, \ldots, u_n \in L/\C, n\geq 2$ and not all $u_i$ are zero, is defined by
	\begin{equation*}\label{eq1_1}
		\mathcal{H}(u_1, \ldots, u_n): = -\sum_{\nu} \min \{\nu(u_1), \ldots, \nu(u_n)\},
	\end{equation*}
	where $\nu$ runs over all places of $L/\C(x)$. For a single element $f\in L^{*}$, we set
	\begin{equation*}
		\mathcal{H}(f) :=\mathcal{H}(1, f)= -\sum_{\nu} \min \{0, \nu(f)\},
	\end{equation*}
	where the sum is taken over all discrete valuations on $L$; thus for $f\in \C(x)$ the height $\mathcal{H}(f)$ is the number of poles of $f$ counted according to multiplicity. We note that if $f\in \C[x]$, then $\mathcal{H}(f)  = [L:\C(x)]\deg f$.  Also, observe that $\nu(f) \neq 0$ only for a finite number of valuations $\nu$ and hence, by the sum formula $\mathcal{H}(f) = \sum_{\nu} \max (0, \nu(f))$. For $f = 0$, we define $\mathcal{H}(f) = \infty$. This height function satisfies some basic properties which we listed below.
	
	\begin{lemma}\label{lem1}
		Suppose that $\mathcal{H}$ is the projective height on $L/\C(x)$ defined as above. Then for $f, g\in L^{*}$ the following properties hold:
		\begin{enumerate}[label=(\alph*)]
			\item $\mathcal{H}(f) \geq 0$ and $\mathcal{H}(f) = \mathcal{H}(1/f)$,
			\item $\mathcal{H}(f) - \mathcal{H}(g) \leq \mathcal{H}(f+g)\leq \mathcal{H}(f) +\mathcal{H}(g)$,
			\item $\mathcal{H}(f) -\mathcal{H}(g) \leq \mathcal{H}(fg)\leq \mathcal{H}(f) +\mathcal{H}(g)$,
			\item $\mathcal{H}(f^n) = |n|\cdot \mathcal{H}(f)$,
			\item $\mathcal{H}(f)=0\iff f\in \C^{*}$,
			\item $\mathcal{H}(P(f)) = \deg P\cdot \mathcal{H}(f)$ for any $P\in \C[T]\setminus \{0\}$.         
		\end{enumerate}
	\end{lemma}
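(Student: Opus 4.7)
The plan is to verify each of the six properties place-by-place: each assertion will reduce to an inequality or identity involving $\min\{0,\nu(\cdot)\}$ at a single discrete valuation of $L$, from which the global statement follows on summing over all places. The three basic tools are the non-archimedean inequality $\nu(f+g)\geq \min\{\nu(f),\nu(g)\}$, multiplicativity $\nu(fg)=\nu(f)+\nu(g)$, and the sum formula $\sum_{\nu}\nu(f)=0$ recalled just before the lemma.

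For (a), non-negativity is immediate from the definition since each summand $-\min\{0,\nu(f)\}$ is non-negative. For $\mathcal{H}(f)=\mathcal{H}(1/f)$ I would substitute $\nu(1/f)=-\nu(f)$ to rewrite $\mathcal{H}(1/f)=\sum_\nu \max\{0,\nu(f)\}$, then use the elementary identity $\max\{0,t\}+\min\{0,t\}=t$ together with the sum formula to obtain $\sum_\nu \max\{0,\nu(f)\}=-\sum_\nu\min\{0,\nu(f)\}=\mathcal{H}(f)$. For (b) the upper bound comes from the ultrametric inequality applied at each place, which gives $\min\{0,\nu(f+g)\}\geq \min\{0,\nu(f)\}+\min\{0,\nu(g)\}$; the lower bound follows by writing $f=(f+g)+(-g)$, applying the upper bound, and noting $\mathcal{H}(-g)=\mathcal{H}(g)$. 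Part (c) is analogous: multiplicativity of $\nu$ combined with the elementary inequality $\min\{0,a+b\}\geq \min\{0,a\}+\min\{0,b\}$ (checked by case analysis on the signs of $a,b$) gives the upper bound, while the lower bound follows from applying the upper bound to $f=(fg)\cdot g^{-1}$ together with (a). Part (d) for $n\geq 0$ is the place-wise identity $\min\{0,n\nu(f)\}=n\min\{0,\nu(f)\}$, and for $n<0$ it reduces to the positive case via $f^n=(1/f)^{|n|}$ and (a).

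For (e) one direction is trivial: $f\in \C^*$ forces $\nu(f)=0$ at every place, hence $\mathcal{H}(f)=0$. Conversely, $\mathcal{H}(f)=0$ means $\nu(f)\geq 0$ at every place, i.e.\ $f$ has no poles on $L$. Here I would invoke the standard function-field fact that in a finite algebraic extension of $\C(x)$—equivalently, on a smooth projective curve over the algebraically closed field $\C$—the only globally regular functions are the constants, giving $f\in \C^*$. For (f), I write $P(T)=c_dT^d+\cdots+c_0$ with $c_d\neq 0$. At places where $\nu(f)\geq 0$, every summand $c_i f^i$ has non-negative valuation, so $\nu(P(f))\geq 0$ and such places contribute nothing to $\mathcal{H}(P(f))$. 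At places where $\nu(f)<0$, the leading term $c_df^d$ has strictly smaller valuation than every other summand, so by the sharp form of the ultrametric inequality $\nu(P(f))=d\nu(f)$. Summing yields $\mathcal{H}(P(f))=-\sum_\nu \min\{0,d\nu(f)\}=d\,\mathcal{H}(f)$.

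The only step that requires input beyond elementary valuation arithmetic is the converse direction of (e), which rests on the standard fact that pole-free elements of a finite extension of $\C(x)$ are constants; the rest is mechanical. I would execute the parts in the order (a), (b), (c), (d), (e), (f), so that (a) is available for all subsequent additive and multiplicative manipulations and the one non-formal ingredient is cleanly isolated in (e).
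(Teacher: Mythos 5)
Your proof is correct; the paper itself does not prove the lemma but simply cites \cite[Lemma 2]{fuchs2019decomposable}, and the argument given there is exactly your place-by-place valuation computation (ultrametric inequality, multiplicativity, the sum formula, the fact that pole-free elements of a function field over $\C$ are constants, and the strict-minimum case of the ultrametric inequality for part (f)). So your proposal supplies, correctly, the standard proof that the paper outsources.
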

	\begin{proof}
		For a detailed proof, see \cite[Lemma 2]{fuchs2019decomposable}.
	\end{proof}

    In the following lemma, we establish that the right hand side inequality of $(b)$ and $(c)$ of Lemma \ref{lem1} becomes equality under suitable additional conditions.
    \begin{lemma}\label{lemequality}
    Let $f,g\in L^{*}$. Then
        \begin{itemize}
            \item[(i)] $\mathcal{H}(f+g)=\mathcal{H}(f)+\mathcal{H}(g)$, if $\nu(f)\neq \nu (g)$ and $\max \{\nu(f),\nu (g)\}>0$ for all valuations $\nu$ over $L$.
            \item[(ii)] $\mathcal{H}(fg)=\mathcal{H}(f)+\mathcal{H}(g)$, if $\nu(f)\nu (g)>0$ for all valuations $\nu$ over $L$. 
        \end{itemize}
    \end{lemma}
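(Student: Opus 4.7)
My plan is to reduce each statement to a pointwise identity at each valuation $\nu$, using the definition
\[
\mathcal{H}(h) = -\sum_{\nu} \min\{0,\nu(h)\}
\]
together with Lemma~\ref{lem1}(b) and (c), which already give the inequalities $\mathcal{H}(f+g)\le\mathcal{H}(f)+\mathcal{H}(g)$ and $\mathcal{H}(fg)\le\mathcal{H}(f)+\mathcal{H}(g)$. Thus in both cases it is enough to prove the reverse inequality, or equivalently to show that at each $\nu$ the contributions to the two sides coincide.

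For part~(i), the key input is the standard strengthening of the non-archimedean triangle inequality: $\nu(f+g)\ge\min\{\nu(f),\nu(g)\}$ with equality whenever $\nu(f)\ne\nu(g)$. Under the hypothesis $\nu(f)\ne\nu(g)$ I therefore have $\nu(f+g)=\min\{\nu(f),\nu(g)\}$, so the required pointwise identity becomes
\[
\min\{0,\nu(f),\nu(g)\} \;=\; \min\{0,\nu(f)\}+\min\{0,\nu(g)\}.
\]
I would verify this by a short case split on the signs of $\nu(f)$ and $\nu(g)$: the identity holds trivially when both are non-negative and when exactly one is negative, while the only offending case, both strictly negative, is precisely what the hypothesis $\max\{\nu(f),\nu(g)\}>0$ rules out. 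Summing over all $\nu$ then yields $\mathcal{H}(f+g)=\mathcal{H}(f)+\mathcal{H}(g)$; at valuations where both $\nu(f)$ and $\nu(g)$ vanish the contribution to each side is zero, so such $\nu$ cause no trouble.

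For part~(ii), I would use $\nu(fg)=\nu(f)+\nu(g)$, which holds at every valuation. The required identity is now
\[
\min\{0,\nu(f)+\nu(g)\} \;=\; \min\{0,\nu(f)\}+\min\{0,\nu(g)\},
\]
and the hypothesis $\nu(f)\nu(g)>0$ forces $\nu(f)$ and $\nu(g)$ to share a sign. If both are positive the identity reads $0=0$, and if both are negative it reads $\nu(f)+\nu(g)=\nu(f)+\nu(g)$; the mixed-sign case, the only one in which the identity fails, is excluded. Summing over $\nu$ gives the claim.

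Neither step contains a serious obstacle; the only delicate point is bookkeeping. The hypotheses cannot literally hold at every single valuation (because $\mathcal{H}(f)$ is finite, so $\nu(f)=0$ for almost all $\nu$), so I would be explicit that the valuations at which both $\nu(f)$ and $\nu(g)$ vanish contribute $0$ to both sides and may be safely ignored, so that the hypothesis need only be invoked at the finitely many valuations where $f$ or $g$ actually has a zero or pole.
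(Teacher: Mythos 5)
Your proposal is correct and follows essentially the same route as the paper: both reduce each part to the pointwise identity $\min\{0,\nu(f\ast g)\}=\min\{0,\nu(f)\}+\min\{0,\nu(g)\}$ and verify it by a sign case split, using $\nu(f+g)=\min\{\nu(f),\nu(g)\}$ when $\nu(f)\neq\nu(g)$ for (i) and $\nu(fg)=\nu(f)+\nu(g)$ for (ii). Your closing remark that the stated hypotheses cannot literally hold at the (cofinitely many) valuations where $\nu(f)=\nu(g)=0$, and that such valuations contribute zero to both sides, is a point the paper only handles implicitly by including those sign configurations in its case analysis.
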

    \begin{proof}
        \begin{enumerate}
            \item[($i$)] Recall that
            \[\mathcal{H}(f+g)=- \sum_{\nu} \min \{ 0,\nu (f+g)\}.\]
            Therefore, it suffices to verify that for every valuation $\nu$, 
            \begin{equation}\label{eqnew1}
                \min \{ 0,\nu (f+g)\}=\min \{ 0,\nu (f)\}+\min \{ 0,\nu (g)\}.
            \end{equation} Since 
            \begin{equation*}
                \nu(f+g)\geq \min\{\nu(f),\nu(g)\}.
            \end{equation*}and because we assume $\nu(f)\neq \nu(g)$, we infact have 
            \begin{equation*}
                \nu(f+g)= \min\{\nu(f),\nu(g)\}.
            \end{equation*} Without loss of generality assume that $\nu(f)<\nu(g)$; hence $\nu(f+g)=\nu(f)$. Fix a valuation $\nu$. Now we consider the following three cases.\\
            \textit{Case 1:} Suppose  $\nu(f+g)=0.$ Then $\nu(f)=0$ and $\nu(g)>0$. Thus
            \[
            \min \{0, \nu(f+g)\}=0=\min \{0, \nu(f)\}+\min \{0, \nu(g)\}.
            \]  So \eqref{eqnew1} holds.\\
            \textit{Case 2:} Suppose $\nu(f+g)>0$. Then $\nu(f)>0$ and since $\nu(f)<\nu(g)$, also $\nu(g)>0$. Hence, in \eqref{eqnew1} $\nu(f+g),\nu(f),\nu(g)$ are positive, and therefore is zero and thus equality holds.\\
            \textit{Case 3:} Suppose $\nu(f+g)<0$. Then $\nu(f)<0$. Also note that either $\nu(g)<0$ or $\nu(g)>0$. Since by our assumption $\max \{\nu(f), \nu(g)\}=\nu(g)>0$ and therefore $\min \{0,\nu (g)\}=0$. Thus,
            \[
            \min\{0,\nu(f+g)\}=\nu(f)=\min\{0,\nu(f)\}+\min\{0,\nu(g)\}
            \]and\eqref{eqnew1} holds. This completes the proof of part $(i)$.
            \item[($ii$)] We have $\nu(fg)=\nu(f)+\nu(g)$. Therefore,
            \begin{equation*}
                \mathcal{H}(fg)=-\sum_{\nu} \min \{0,\nu(fg)\}= -\sum_{\nu} \min \{0,\nu(f)+\nu(g)\}. 
            \end{equation*} So it is suffices to verify that for every valuation $\nu$,
            \begin{equation}\label{eqnew2}
                \min \{ 0,\nu (f)+\nu(g)\}=\min \{ 0,\nu (f)\}+\min \{ 0,\nu (g)\}.
            \end{equation} We check this by considering all possible sign combination of $\nu(f)$ and $\nu(g)$.\\
            \textit{Case 1:} Suppose $\nu(f)=\nu(g)=0$. Then, $\nu(f)+\nu(g)=0$, and the identity \eqref{eqnew2} holds trivially. \\
            \textit{Case 2:} Suppose $\nu(f)>0$ and $\nu(g)=0$. Then $\nu(f)+\nu(g)=\nu(f)$ and hence all the minimum in $\eqref{eqnew2}$ are zero and the identity holds.\\
            \textit{Case 3:} Suppose $\nu(f)>0$ and $\nu(g)>0$. Then 
            \[
            \min \{0, \nu(f+g)\}=0=\min \{0, \nu(f)\}+\min \{0, \nu(g)\}.
            \]
            \textit{Case 4:} Suppose $\nu(f)<0$ and $\nu(g)<0$. Then $\nu(f)+\nu(g)<0$. Thus, 
            \begin{equation*}
                \min \{ 0,\nu(f)+\nu(g)\}= \nu(f)+\nu(g)= \min \{ 0,\nu(f)\}+\min \{0,\nu(g)\}.
            \end{equation*}
             Similarly, if we interchange the roles of $f$ and $g$ in all the above cases, we get the same conclusion. This completes the proof of Lemma \ref{lemequality}.
        \end{enumerate} 
    \end{proof}
	
	The following theorem is due to Brownawell and Masser \cite{brow1986} and this is an immediate consequence of \cite[Theorem B, Corollary 1]{Fuchs2012}. This result gives an upper bound for the height of $S$-units, which arise as a solution of certain $S$-unit equations.
	\begin{proposition}[Brownawell-Masser]\label{browthm}
		Let $L/\C(x)$ be a function field of one variable of genus $\mathfrak{g}$. Moreover, for a finite set $S$ of discrete valuations, let $u_1, \ldots, u_n$ be not all constant $S$-units and 
		\[1+u_1+\cdots +u_n= 0,\] where no proper subsum of the left hand side vanishes. Then we have 
		\begin{equation*}\label{bmineq}
			\max_{i=1, \ldots, n} \mathcal{H}(u_i) \leq \binom{n}{2}(|S|+\max(0, 2\mathfrak{g}-2)). 
		\end{equation*}
	\end{proposition}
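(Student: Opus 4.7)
The plan is to follow the standard Wronskian-based proof of Brownawell--Masser in function fields. Setting $u_{0}:=1$, the hypothesis becomes $u_{0}+u_{1}+\cdots+u_{n}=0$, and the no-vanishing-subsum condition is readily seen to be equivalent to the statement that any $n$ of the elements $u_{0},u_{1},\ldots,u_{n}$ are linearly independent over $\C$. Fix a nonconstant $x\in L$ and the associated derivation $D=d/dx$ on $L$. For each $n$-subset $J=\{j_{1},\ldots,j_{n}\}\subset\{0,1,\ldots,n\}$, form the Wronskian
\[
W_{J}:=\det\bigl(D^{i}u_{j_{k}}\bigr)_{0\le i\le n-1,\,1\le k\le n},
\]
which is nonzero by the linear independence above. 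Pulling $u_{j_{k}}$ out of the $k$-th column gives $W_{J}=u_{j_{1}}\cdots u_{j_{n}}\cdot W_{J}^{\ast}$, where $W_{J}^{\ast}$ is a polynomial of total weighted degree $\binom{n}{2}$ in the logarithmic derivatives $D^{s}u_{j}/u_{j}$.

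Applying Cramer's rule to the linear system consisting of $u_{0}+\cdots+u_{n}=0$ together with its first $n-1$ derivatives, each $u_{i}$ is expressed as a ratio $W_{J_{1}}/W_{J_{2}}$ of two such Wronskians, so after the $u$-factors cancel one obtains $u_{i}=W_{J_{1}}^{\ast}/W_{J_{2}}^{\ast}$, a ratio of polynomials in logarithmic derivatives. Since each $u_{j}$ is an $S$-unit, $Du_{j}/u_{j}$ is regular away from $S$ and from the finitely many places where $d/dx$ has a pole; at such a place $Du_{j}/u_{j}$ has pole order at most one, and inductively $D^{s}u_{j}/u_{j}$ has pole order at most $s$. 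Applying the product formula $\sum_{\nu}\nu(f)=0$ to $u_{i}$ and summing these local pole-order estimates shows that the contribution of $S$ to $\mathcal{H}(u_{i})$ is at most $\binom{n}{2}|S|$, while the contribution from the ramification divisor of $D$ (i.e.\ from the zeros and poles of $dx$) is controlled by the degree of the canonical divisor and is bounded, via Riemann's inequality, by $\binom{n}{2}\max(0,2\mathfrak g-2)$.

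The main obstacle is the careful combinatorial accounting of the pole orders of $W_{J}^{\ast}$: although individual terms $D^{s}u_{j}/u_{j}$ may have pole order $s$ (potentially as large as $n-1$), in the Leibniz expansion of the determinant each permutation contributes weights summing to exactly $0+1+\cdots+(n-1)=\binom{n}{2}$, so that the total weighted pole order at any single place $\nu$ is at most $\binom{n}{2}$ times its local multiplicity. Once this combinatorial bound is in place, summing over all valuations and invoking Riemann's inequality on the complete nonsingular curve underlying $L$ produces the announced bound $\max_{i}\mathcal{H}(u_{i})\le\binom{n}{2}(|S|+\max(0,2\mathfrak g-2))$, exactly as in the streamlined derivation of the cited reference \cite{Fuchs2012}.
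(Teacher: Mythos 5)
The paper does not actually prove this proposition: it is quoted from Brownawell--Masser and \cite[Theorem B, Corollary 1]{Fuchs2012}, so your Wronskian sketch is a from-scratch attempt rather than a reconstruction of an argument in the paper.

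The attempt has a genuine gap at its first step. You claim the no-proper-vanishing-subsum condition is ``readily seen to be equivalent'' to linear independence over $\C$ of every $n$-element subset of $\{u_0,\ldots,u_n\}$. Only one implication is true. For a counterexample to the direction you need, take $u_0=1$, $u_1=t$, $u_2=2t$, $u_3=-3t-1$ in $\C(t)$ with $S=\{\nu_0,\nu_{-1/3},\nu_\infty\}$: the full sum vanishes, no proper subsum vanishes, and not all $u_i$ are constant, yet $2u_1-u_2=0$, so $\{u_0,u_1,u_2\}$ is linearly dependent --- indeed the span of all four elements is $2$-dimensional, so \emph{every} Wronskian $W_J$ you form is identically zero and the Cramer's-rule identity $u_i=W_{J_1}^{\ast}/W_{J_2}^{\ast}$ reads $0/0$. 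This is not a technicality: nonvanishing of the $W_J$ is precisely the hypothesis the Wronskian method requires, and it is not supplied by the stated assumptions. The actual argument of Brownawell and Masser proves the height bound first under a linear-independence hypothesis (their Theorem B) and then deduces the no-vanishing-subsum version (their Corollary 1) by passing to a maximal linearly independent subfamily and using the subsum condition to control the heights of the omitted terms; that reduction is the step missing from your outline and is where the hypothesis is genuinely consumed. The rest of your local analysis (pole order at most $s$ for $D^{s}u_j/u_j$ away from $S$ and the ramification of $dx$, weighted degree $\binom{n}{2}$ of $W_J^{\ast}$, total contribution $\max(0,2\mathfrak{g}-2)$) is standard and essentially correct, though the global bound at the ramified places comes from the degree $2\mathfrak{g}-2$ of the canonical divisor, not from ``Riemann's inequality'' in the sense of Proposition \ref{nthm5}.
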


In addition, we employ the following classical estimates for the genus of a compositum of function fields.
\begin{proposition}[Castelnuovo's Inequality]\label{castelnuvothm}
     Let $F/\mathbb{C}$ be a function field of one variable of genus $\mathfrak{g}$. Suppose there are given two subfields $F_1/\mathbb{C}$ and $F_2/\mathbb{C}$ of $F/\mathbb{C}$ satisfying  

     \begin{enumerate}
     \item[(1)] $F = F_1F_2$ is the compositum of $F_1$ and $F_2$.
     \item[(2)] $[F:F_i] = k_i$, and $F_i/\mathbb{C}$ has genus $\mathfrak{g}_i$ $(i=1,2)$.
     \end{enumerate}

     Then we have
     \[
      \mathfrak{g} \leq k_1 \mathfrak{g}_1 + k_2 \mathfrak{g}_2 + (k_1 - 1)(k_2 - 1).
    \]
\end{proposition}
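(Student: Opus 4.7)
The proof of Castelnuovo's inequality is classical, and the plan is to proceed by a Riemann--Roch dimension comparison on function spaces on $F$, $F_1$, and $F_2$, exploiting the hypothesis $F = F_1 F_2$ to relate function spaces across the three fields.

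First I would choose rational places $P_1 \in \mathbb{P}_{F_1}$ and $P_2 \in \mathbb{P}_{F_2}$; these exist because the constant field $\mathbb{C}$ is algebraically closed. Let $Q_i := \mathrm{Con}_{F/F_i}(P_i)$ denote the conorm divisor of $P_i$ in $F$, so that $\deg Q_i = k_i$ by the fundamental equality. For parameters $r, s$ sufficiently large (in particular $r > 2\mathfrak{g}_1 - 2$, $s > 2\mathfrak{g}_2 - 2$, and $rk_1 + sk_2 > 2\mathfrak{g} - 2$), Riemann--Roch yields the exact dimensions
\[
\dim \mathcal{L}_{F_1}(rP_1) = r + 1 - \mathfrak{g}_1, \qquad \dim \mathcal{L}_{F_2}(sP_2) = s + 1 - \mathfrak{g}_2,
\]
\[
\dim \mathcal{L}_F(rQ_1 + sQ_2) = rk_1 + sk_2 + 1 - \mathfrak{g}.
\]

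The core step is to construct, inside $\mathcal{L}_F(rQ_1 + sQ_2)$, a large linearly independent family of products $fg$ with $f \in \mathcal{L}_{F_1}(rP_1)$ and $g \in \mathcal{L}_{F_2}(sP_2)$, which exists as a subspace of $\mathcal{L}_F(rQ_1 + sQ_2)$ precisely because $F = F_1 F_2$. I would select bases $\{f_i\}$ of $\mathcal{L}_{F_1}(rP_1)$ and $\{g_j\}$ of $\mathcal{L}_{F_2}(sP_2)$ consisting of functions with strictly increasing pole orders at $P_1$ and $P_2$ respectively (possible because the Weierstrass gap sequence on each $F_i$ has only $\mathfrak{g}_i$ gaps), and then track the valuations of the products $f_i g_j$ at the places of $F$ lying above $P_1$ and $P_2$ using the ramification data encoded in $Q_i$. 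A careful count should yield enough products with pairwise distinct valuation vectors at $Q_1$ and $Q_2$, hence linearly independent in $\mathcal{L}_F(rQ_1 + sQ_2)$. Writing $V_{r,s}$ for the span of these products, the containment $V_{r,s} \subseteq \mathcal{L}_F(rQ_1 + sQ_2)$ gives $\mathfrak{g} \leq rk_1 + sk_2 + 1 - \dim V_{r,s}$; letting $r, s$ tend to infinity and optimizing the error then isolates the bound $\mathfrak{g} \leq k_1 \mathfrak{g}_1 + k_2 \mathfrak{g}_2 + (k_1 - 1)(k_2 - 1)$.

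The principal obstacle will be obtaining the lower bound on $\dim V_{r,s}$ with the \emph{correct} additive constant $(k_1 - 1)(k_2 - 1)$. The delicate point is to count the pole-order ``gap'' combinations at $Q_1$ and $Q_2$ that cannot be realized by products $f_i g_j$: these gap combinations contribute precisely the error $(k_1 - 1)(k_2 - 1)$, reflecting the mismatch between the product filtration on $F_1$ and $F_2$ and the ramification pattern of $P_i$ in the compositum $F$. Any looser analysis yields a correct but non-sharp inequality, so this careful pole-order bookkeeping, informed by the Weierstrass gap structure on each $F_i$ and the splitting of $P_i$ in the extensions $F/F_i$, is the technical heart of the proof.
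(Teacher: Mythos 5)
The paper does not prove this proposition at all: it is quoted as a classical result with a pointer to Stichtenoth's book, so there is no internal argument to compare yours against. Your outline does follow the architecture of the standard proof in that reference (degree-one places $P_i$, conorm divisors $Q_i$ of degree $k_i$, Riemann--Roch on all three fields for large $r,s$, and a lower bound on the span of products $\mathcal{L}_{F_1}(rP_1)\cdot\mathcal{L}_{F_2}(sP_2)$ inside $\mathcal{L}_F(rQ_1+sQ_2)$), so the strategy is sound.

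However, as written the proposal has a genuine gap exactly where you flag it, and the mechanism you propose to close it is not the one that works. You suggest forcing linear independence of the products $f_ig_j$ by arranging pairwise distinct valuation vectors at the places above $P_1$ and $P_2$; but $g_j\in F_2$ is regular at the places of $F$ over $P_1$ and its valuations there are not controlled by its pole order at $P_2$, so there is no reason this bookkeeping produces $k_1m+k_2n-k_1k_2$ independent products (the count needed to recover the constant $(k_1-1)(k_2-1)$), and nowhere in your scheme does the hypothesis $F=F_1F_2$ actually get used quantitatively. The standard argument instead derives independence over $\mathbb{C}$ of the products from linear independence \emph{over $F_1$} of suitably chosen $z_1,\dots,z_{k_1}\in\mathcal{L}_{F_2}(s_0P_2)$ --- such elements exist precisely because $F_2$ generates $F$ over $F_1$ and $[F:F_1]=k_1$ --- and then bounds the threshold $s_0$ via the Weierstrass gap sequence at $P_2$; the constant $(k_1-1)(k_2-1)$ falls out of that bound on $s_0$, not out of a valuation-vector count. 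Without that replacement your plan yields at best a non-sharp inequality, which for the purposes of this paper (where only \emph{some} explicit bound is needed) would suffice, but it does not prove the stated proposition.
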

\begin{proof}
    See \cite[pp.~130]{stichtenoth2009algebraic}.
\end{proof}

\begin{proposition}[Riemann's Inequality]\label{nthm5}
    Suppose that 
$F = \mathbb{C}(x,y)$. Then we have the following estimate for the genus $\mathfrak{g}$ of $F/\mathbb{C}$:

\[
\mathfrak{g} \leq ([F:\mathbb{C}(x)] - 1)([F:\mathbb{C}(y)] - 1).
\]
\end{proposition}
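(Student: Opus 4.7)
The plan is to derive Riemann's inequality as a direct corollary of Castelnuovo's inequality (Proposition \ref{castelnuvothm}), which has already been cited immediately before the statement. I would set $F_1 := \C(x)$ and $F_2 := \C(y)$, both viewed as subfields of $F = \C(x,y)$. The compositum $F_1 F_2$ is by definition the smallest subfield of $F$ containing $\C$, $x$, and $y$; since $F$ itself is generated over $\C$ by $x$ and $y$, we have $F_1 F_2 = F$, so hypothesis (1) of Castelnuovo's inequality is fulfilled.

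Next I would observe that $F_1 = \C(x)$ and $F_2 = \C(y)$ are rational function fields of one variable over $\C$, and hence both have genus $\mathfrak{g}_1 = \mathfrak{g}_2 = 0$. Writing $k_1 := [F:F_1]$ and $k_2 := [F:F_2]$ as in the hypothesis of Castelnuovo, the inequality in Proposition \ref{castelnuvothm} collapses to
\[
\mathfrak{g} \leq k_1\cdot 0 + k_2 \cdot 0 + (k_1-1)(k_2-1) = (k_1-1)(k_2-1),
\]
which is exactly Riemann's inequality.

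Since the argument is a one-line application of Proposition \ref{castelnuvothm}, there is essentially no obstacle once Castelnuovo's inequality is granted; the only point worth briefly noting is the identification $F_1 F_2 = F$, which is immediate from the definition of $\C(x,y)$. In particular, no additional input from the Riemann--Roch theorem, Riemann--Hurwitz, or the theory of differentials is needed, so I would present the result simply as an immediate corollary of Castelnuovo's inequality applied to the two rational subfields $\C(x)$ and $\C(y)$.
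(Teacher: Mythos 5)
Your proof is correct, and it is in fact exactly how the cited source handles this: in Stichtenoth's book the paper points to, Riemann's inequality appears as an immediate corollary of Castelnuovo's inequality with $F_1=\mathbb{C}(x)$, $F_2=\mathbb{C}(y)$ both of genus zero. The paper itself offers only the citation, so your one-line derivation from Proposition \ref{castelnuvothm} is the intended argument (the only implicit point being that $x$ and $y$ are transcendental over $\mathbb{C}$, which is forced by the finiteness of the degrees in the statement).
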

\begin{proof}
    See \cite[pp.~132]{stichtenoth2009algebraic}.
\end{proof}

 We make use of the following results to prove Theorem \ref{2thm} and Theorem \ref{thm2}.

    \begin{proposition}\label{lem3.3}
Let $h \in \mathbb{C}[x]$ be indecomposable and let $y \neq x$ be a root of 
$h(X) - h(x) \in \mathbb{C}(x)[X]$. Then either 
$\mathbb{C}(x) \cap \mathbb{C}(y) = \mathbb{C}(x)$ and $h$ is cyclic or 
$\mathbb{C}(x) \cap \mathbb{C}(y) = \mathbb{C}(h(x))$.
\end{proposition}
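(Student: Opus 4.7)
The plan is to analyze the intermediate field $K := \C(x) \cap \C(y)$, which sits between $\C(h(x))$ and $\C(x)$, using L\"uroth's theorem together with the ramification of the place at infinity in $\C(x)/\C(h(x))$.

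By L\"uroth I may write $K = \C(f(x))$ for some $f \in \C(x)$. The decisive observation is that the pole of $h(x)$ at infinity is totally ramified in $\C(x)/\C(h(x))$, so there is a unique place $P_\infty$ of $\C(x)$ lying above it, and consequently $P_\infty$ is the only place of $\C(x)$ above its restriction $P_K$ to $K$. After replacing $f$ by a M\"obius transform (which does not change $K$), I may assume $P_K$ is the pole of $f$; then the uniqueness of $P_\infty$ above $P_K$ forces $f \in \C[x]$. As $h(x) \in \C(f(x))$, I write $h(x) = g(f(x))$ with $g \in \C(T)$; the same pole argument (a denominator root of $g$ would produce a finite pole of $h$) forces $g \in \C[T]$. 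Thus $h = g \circ f$ is a polynomial decomposition, and indecomposability of $h$ leaves only $\deg f = 1$ (giving $K = \C(x)$) or $\deg g = 1$ (giving $K = \C(h(x))$), establishing the dichotomy.

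It remains to show that $K = \C(x)$ implies $h$ is cyclic. Computing $[\C(x,y):\C(h(x))]$ in two ways gives $[\C(x,y):\C(x)] = [\C(x,y):\C(y)]$, and this combined with $\C(x) \subseteq \C(y)$ forces $\C(x) = \C(y)$. Hence $y = \phi(x)$ for a M\"obius transformation $\phi$ satisfying $h \circ \phi = h$, and since $h$ is a polynomial with its only pole at $\infty$, $\phi$ must fix $\infty$, i.e.\ $\phi$ is affine. The set $G := \{\phi \in \mathrm{Aff}(\C) : h \circ \phi = h\}$ is therefore a finite subgroup of the affine group of order $\geq 2$ (as $y \neq x$). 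Any finite subgroup of $\mathrm{Aff}(\C)$ fixes the centroid of any orbit, so after a linear change of coordinates sending that fixed point to $0$, $G$ becomes a group of rotations $\langle x \mapsto \zeta x\rangle$ with $\zeta$ a primitive $|G|$-th root of unity. The $G$-invariant polynomials are $\C[x^{|G|}]$, so $h = q(x^{|G|})$ for some polynomial $q$; indecomposability of $h$ with $|G| \geq 2$ then forces $\deg q = 1$, whence $h = \ell_1 \circ x^n \circ \ell_2$ is cyclic.

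The main obstacle I anticipate is the ramification step: making rigorous that the unique place of $\C(x)$ above $P_K$ genuinely lets us replace the abstract L\"uroth generator $f$ by a polynomial, and that the pole structure of $h$ then forces $g$ to be a polynomial as well. Everything else reduces to standard degree counts, the rigidity that makes $\phi$ affine, and the classification of finite subgroups of $\mathrm{Aff}(\C)$ as cyclic rotation groups.
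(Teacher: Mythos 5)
The paper offers no proof of this proposition—it simply cites Lemma~4 of \cite{fuchs2019decomposable}—and your argument is correct and essentially a self-contained reconstruction of that proof: like the cited lemma, you reduce the dichotomy to L\"uroth's theorem plus the correspondence between intermediate fields of $\mathbb{C}(x)/\mathbb{C}(h(x))$ and decompositions of $h$ (which you re-derive cleanly via total ramification of the place at infinity), and in the case $\mathbb{C}(x)\cap\mathbb{C}(y)=\mathbb{C}(x)$ you pass, as there, to an affine symmetry $h\circ\phi=h$ and the classification of finite subgroups of $\mathrm{Aff}(\mathbb{C})$ to conclude that $h$ is cyclic. The one assertion you leave unjustified is the finiteness of $G=\{\phi\in\mathrm{Aff}(\mathbb{C}):h\circ\phi=h\}$; this is immediate (each $\phi\in G$ permutes a generic fibre $h^{-1}(c)$, which has $\deg h\geq 2$ elements, and an affine map is determined by its values at two points), so it is a minor omission rather than a gap.
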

\begin{proof}
    For a detailed proof see \cite[Lemma 4]{fuchs2019decomposable} 
\end{proof}

\begin{proposition}\label{nlemma1}
Let $h \in \mathbb{C}[x]$ be indecomposable and let $y \neq x$ be a root of 
$h(X) - h(x) \in \mathbb{C}(x)[X]$. Then the following hold.

\begin{enumerate}
    \item[(1)] For $q \in \mathbb{C}[h(x)]$ we have $q(x) = q(y)$. Furthermore, if $h$ is not cyclic and $q(x) = q(y)$ for some $q \in \mathbb{C}[x]$, then $q \in \mathbb{C}[h(x)]$.
    
    \item[(2)] Let $d := [\mathbb{C}(x,y):\mathbb{C}(x)]$. Then $d \leq \deg h - 1$.
    
    \item[(3)] The genus of the function field $\mathbb{C}(x,y)$ (over $\mathbb{C}$) is not greater than $\dfrac{(d-1)(d-2)}{2}$.
\end{enumerate}
\end{proposition}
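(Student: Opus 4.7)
For part (1), the forward direction is a direct substitution: if $q = P\circ h$ with $P \in \C[T]$, then $q(y) = P(h(y)) = P(h(x)) = q(x)$, using $h(x) = h(y)$. For the converse, assume $h$ is not cyclic and $q(x) = q(y)$. Then $q(x) \in \C(x) \cap \C(y) = \C(h(x))$ by Proposition~\ref{lem3.3}. To promote membership in the field $\C(h(x))$ to membership in the ring $\C[h(x)]$, I would appeal to integral closure: $x$ is a root of $h(T) - h(x) \in \C[h(x)][T]$, which is monic after rescaling by the leading coefficient of $h$, so $\C[x]$ is integral over $\C[h(x)]$. In particular $q(x) \in \C[x]$ is integral over $\C[h(x)]$; since $\C[h(x)]$ is a polynomial ring and hence integrally closed in its fraction field $\C(h(x))$, we conclude $q(x) \in \C[h(x)]$.

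For part (2), factor $h(T) - h(x) = (T - x)\,f(x, T)$ in $\C[x][T]$, so that $\deg_T f = \deg h - 1$. Since $y \neq x$ is a root of $h(T) - h(x)$, it is a root of $f(x, T)$, and hence the minimal polynomial of $y$ over $\C(x)$ divides $f(x, T)$. Therefore $d \leq \deg h - 1$.

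For part (3), the essential input is the classical Ritt--Fried equivalence: for $h \in \C[x]$, indecomposability of $h$ is equivalent to irreducibility of $f(X, T) := (h(T) - h(X))/(T - X)$ in $\C(X)[T]$. (The easy direction: any decomposition $h = g \circ k$ with $\deg g, \deg k \geq 2$ produces a proper factorization of $f$ through $(k(T) - k(X))/(T - X)$; the converse is Ritt's theorem that the monodromy group of an indecomposable polynomial is doubly transitive.) Granted this, the minimal polynomial of $y$ over $\C(x)$ is $f(x, T)$ itself, so $d = \deg h - 1$. By Gauss's lemma, the two-variable polynomial $f(X, T) \in \C[X, T]$ is irreducible; direct inspection of its top homogeneous component $a_{\deg h}\sum_{i+j=\deg h - 1} T^i X^j$ (where $a_{\deg h}$ is the leading coefficient of $h$) shows that the total degree of $f$ equals $\deg h - 1 = d$. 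Therefore $\C(x, y)$ is the function field of an irreducible projective plane curve of degree $d$, whose geometric genus is bounded by $(d-1)(d-2)/2$ via the standard plane-curve genus formula (adjunction). Note that the bound $(d-1)^2$ coming from Castelnuovo's or Riemann's inequality applied with $F_1=\C(x)$ and $F_2=\C(y)$ is not sharp enough here, which is precisely why the plane-curve model is needed.

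The principal obstacle is invoking the Ritt--Fried equivalence in part (3); once that is available, the genus bound reduces to the textbook plane-curve estimate. The other two parts combine Proposition~\ref{lem3.3} with a standard integral-closure argument and an elementary polynomial division.
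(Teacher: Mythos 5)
The paper does not actually prove this proposition; it cites \cite[Lemma 5]{fuchs2019decomposable}, so your argument has to stand on its own. Parts (1) and (2) do: the substitution for the forward direction, the appeal to Proposition~\ref{lem3.3} for the converse, the integral-closure step (using that $\C[x]$ is integral over $\C[h(x)]$ and that $\C[h(x)]\cong\C[T]$ is integrally closed in its fraction field), and the divisibility argument giving $d\le \deg h-1$ are all correct.

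Part (3), however, rests on a false lemma. Indecomposability of $h$ is \emph{not} equivalent to irreducibility of $f(X,T)=(h(T)-h(X))/(T-X)$, and the monodromy group of an indecomposable polynomial need not be doubly transitive: for a prime $p$, $h=x^p$ is indecomposable yet $f=\prod_{\zeta^p=1,\ \zeta\neq1}(T-\zeta X)$ splits into linear factors (monodromy $C_p$), and $h=T_p$ is indecomposable while $f$ splits into $(p-1)/2$ quadratics (monodromy $D_p$). The correct statement (due to Fried, resting on the Schur--Burnside theorem on primitive groups containing an $n$-cycle) requires excluding cyclic and dihedral polynomials of prime degree --- but the proposition makes no such exclusion (that hypothesis enters only later, in Theorems~\ref{2thm} and~\ref{thm2}), so as written your proof does not cover all the cases it must. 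Fortunately the heavy input is unnecessary, and your own computation of the top homogeneous component repairs the argument: since the total degree of $f$ equals $\deg_T f=\deg h-1$, and since both quantities are additive over a factorization in $\C[X,T]$ while $\mathrm{totdeg}\ge\deg_T$ holds for every factor, \emph{every} irreducible factor $f_1$ of $f$ in $\C[X,T]$ satisfies $\mathrm{totdeg}\,f_1=\deg_T f_1$. Taking $f_1$ to be the irreducible factor vanishing at $(x,y)$ (by Gauss's lemma this is, up to a factor from $\C[x]$, the minimal polynomial of $y$ over $\C(x)$, so $\deg_T f_1=d$), you obtain an irreducible plane curve of degree exactly $d$ with function field $\C(x,y)$, and the bound $(d-1)(d-2)/2$ follows from the plane-curve genus formula with no appeal to double transitivity.
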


\begin{proof}
    See \cite[Lemma 5]{fuchs2019decomposable} 
\end{proof}

\section{Proof of main results}	 \label{sec:proof}
 \subsection{Proof of Theorem \ref{thm1}}
	We observe by the dominant root condition that $\deg \alpha_1\geq 1$ and $\deg \beta_1\geq 1$. Also note that, we can neither have $\deg \alpha_1=1$ nor $\deg\beta_1=1$. Otherwise, if $\deg \alpha_1=1$, then $U_n(x)$ would possess exactly two characteristic roots, one being a constant. However, this form is excluded by the assumption. A similar argument applies to $\deg\beta_1$.
    So $\deg U_n=\deg a_1+n \deg \alpha_1\geq 6$. Similarly, the bound $\deg V_m=\deg b_1+m \deg\beta_1\geq 6$ holds. 
	
	Now assume that \eqref{thmeq} has infinitely many solutions $(x,y)\in K\times K$ with a bounded $\mathcal{O}_S$-denominator. Then by Theorem \ref{biluthm}, we have
	\begin{equation}\label{compoeq}
		U_n(x)=\phi(u(\lambda(x))),\quad V_m(x)=\phi(v(\mu(x)))
	\end{equation} where $\lambda(x), \mu(x)\in K[x]$ are linear polynomials, $(u(x),v(x))$ is a standard pair or a specific pair and $\phi(x)\in K[x]$.
	
	Suppose that $\deg \phi=1$. Let $\phi(x)=ax+b, ~a,b\in K$. Then \eqref{compoeq} implies that 
	\begin{equation*}
		U_n(x)=au(\lambda(x))+b,\quad V_m(x)=av(\mu(x))+b.
	\end{equation*} First assume that $(u(x),v(x))$ be a standard pair of the first kind. Then we have 
    \begin{equation*}
        U_n(x)=a(\lambda(x))^{k}+b \quad \text{or} \quad V_m(y)=a(\mu(y))^{\ell}+b 
    \end{equation*}
	 where $k=\deg a_1+n \deg \alpha_1$ and $\ell=\deg b_1+m \deg \beta_1$. But this is incompatible with the structure of $U_n(x)$ and $V_m(y)$. Thus $(u(x),v(x))$ cannot be a standard pair of the first kind.
	
	Since $\deg U_n\geq 3$ and $\deg V_m\geq 3$, $(u(x),v(x))$ cannot be a standard pair of second kind.
	
	If $(u(x),v(x))$ is a standard pair of the third kind, then we obtain
	\begin{equation}\label{dickscomp}
		U_n(x)=aD_p(\lambda(x),r)+b,
	\end{equation} with parameter $r$ and $\lambda(x)=cx+d, ~0\neq c,d\in K$.
	Since $U_n(x)$ is indecomposable and Dickson polynomials have the property that
	\[D_{kl}(x,r)=D_k(D_l(x,r),r^l)\] the index $p$ in \eqref{dickscomp} must be a prime. We have
	$D_p(x,r)=d_px^p+d_{p-2}x^{p-2}+\cdots ,$ where $$d_{p-2j}=\frac{p}{p-j}\binom{p-j}{j}(-r)^j,\quad (j=0,\ldots,\left\lfloor \frac{p}{2} \right\rfloor)$$ and 
    \begin{align*}
        \lambda(x)^p&=(cx+d)^p=c^px^{p}+ \binom{p}{1}c^{p-1}d x^{p-1}+\binom{p}{2}c^{p-2}d^2x^{p-2}+\cdots+d^p\\
        \lambda(x)^{p-2}&=(cx+d)^{p-2}=c^{p-2}x^{p-2}+ \binom{p-2}{1}c^{p-3}d x^{p-3}+\binom{p-2}{2}c^{p-4}d^2x^{p-4}\\
        &\hspace{4cm}+\cdots+d^{p-2},
    \end{align*}
and so on. Therefore, 
    \begin{align*}
       D_p(\lambda(x),r)&=d_p\lambda(x)^p+d_{p-2}\lambda(x)^{p-2}+\cdots ,\\
       &=d_pc^px^p+d_p\binom{p}{1}c^{p-1}dx^{p-1}+\left(d_{p-2}c^{p-2}+d_p\binom{p}{2}c^{p-2}d^2\right)x^{p-2}+\cdots .
    \end{align*}From \eqref{maineq} and \eqref{dickscomp}, we have
	\begin{align}\label{un=dp}
&a_1(x)\alpha_1(x)^n+\cdots+a_d(x)\alpha_d(x)^n\\
&=a\left(d_pc^px^p+d_p\binom{p}{1}c^{p-1}dx^{p-1}+(d_{p-2}c^{p-2}+d_p\binom{p}{2}c^{p-2}d^2)x^{p-2}+\cdots\right)+b \nonumber.
	\end{align}
 Comparing the degrees on both sides of \eqref{un=dp}
 \[\deg a_1+n\deg \alpha_1=p.\] Also we have $\deg a_i\leq \deg a_1$ and $\deg \alpha_i\leq \deg \alpha_1-1$ for all $i=2,\ldots,d$. Thus, for any $i=2,\ldots,d$ the following holds 
	\begin{align*}
		\deg a_i+n\deg \alpha_i&\leq \deg a_1+n(\deg \alpha_1-1)=\deg a_1+n\deg \alpha_1-n\\
		&=\deg a_1+p-\deg a_1-n = p-n  \leq p-3 \quad (\text{since }n\geq 3)\nonumber
	\end{align*} By assumption (iv) in theorem, the coefficients of $x^{p-2}$ and $x^{p-1}$ on the left hand side of \eqref{un=dp} are zero, where as if $d\neq 0$, then the coefficient of $x^{p-1}$ in right hand side is non zero, and if $d=0$, then the coefficient of $x^{p-2}$ in the right hand side is $d_{p-2}c^{p-2}$ which is non zero. Therefore, $(u(x),v(x))$ cannot be a standard pair of third kind.
	
	If $(u(x),v(x))$ is a standard pair of the fourth kind, then we have 
	\[U_n(x)=aD_k(\lambda(x),r)+b,\] where $k$ is even. This contradicts to the assumption that $U_n(x)$ is indecomposable.
	
	If $(u(x),v(x))$ is a standard pair of the fifth kind, then we have $u(x)=3x^4-4x^3$ or $v(x)=3x^4-4x^3$.  That is, $\deg a_1+n\deg \alpha_1=\deg U_n=4$ or  $\deg b_1+m\deg \beta_1=\deg V_m=4$. Since $\deg U_n \geq 6$ and $\deg V_m \geq 6$, this case is impossible. Hence, $(u(x),v(x))$ cannot be a standard pair of the fifth kind.

    If $(u(x),v(x))$ is a  specific pair, then
    \begin{equation*}
        u(x)=D_k(x,r^{\ell/d}) \quad \text{and}\quad v(x)=-D_{\ell}\left(x\cos \frac{\pi}{d}, r^{k/d}\right), \quad (\text{ or  switched})
    \end{equation*} where $d=\gcd(k,\ell)\geq 3$. Since $\gcd(k,\frac{\ell}{d})=1,$ we can express \eqref{compoeq} in the same form as \eqref{dickscomp}, and the subsequent arguments follow similarly. Therefore, $(u(x),v(x))$ cannot be a specific pair.
    
	Therefore, we conclude that $\deg \phi=1$ is impossible. So we have $\deg \phi>1$. Since $U_n$ is indecomposable, it follows that $\deg u=1$. Let $u(x)=c_1x+e_1$, $0\neq c_1, e_1\in K$. As a result, the following identities hold
    
	\[U_n(x)=\phi(c_1x+e_1) \quad \text{and} \quad V_m(y)=\phi(q(y)),\] where $q(y)\in K[y]$. Now set 
	\[Q(y):=\frac{q(y)-e_1}{c_1},\] which defines a polynomial in $K[y]$. This leads to the final identity
	\[U_n(Q(y))=\phi(q(y))=V_m(y).\]

    If $V_m(y)$ is indecomposable, then $q(y)$ is linear. Thus, by construction $Q(y)$ is linear, too.
	
	Conversely, if there exists a polynomial $Q(y)\in K[y]$ such that the identity $U_n(Q(y))=V_m(y)$, then \eqref{thmeq} has infinitely many rational solutions with a bounded $\mathcal{O}_S$-denominator. This completes the proof of Theorem \ref{thm1}. \qed
\subsection{Proof of Theorem \ref{2thm}}
    Firstly observe that  
    \begin{equation}\label{eqndeg}
        \deg q=3\deg a \quad \text{and} \quad \deg \mathfrak{D}=3\deg a+\deg c.
    \end{equation}
    Assume that $W_n(x)=g(h(x))$, where $h$ is indecomposable and neither cyclic nor dihedral. Recall that $x$ and $y$, which define \eqref{thm2eq1}, are such that $h(x)=h(y)$ and $x\neq y$. From Lemma \ref{lem3.3} it follows that $\C(x)\cap \C(y)=\C(h(x))$. Assume further that there is no proper vanishing subsum of \eqref{thm2eq1} and write it as 
    \begin{equation}\label{sumeq}
        1+ \frac{t_1\mu_1^n}{t_3\mu_3^n}+\frac{t_2\mu_2^n}{t_3\mu_3^n}-\frac{w_1\lambda_1^n}{t_3\mu_3^n}-\frac{w_2\lambda_2^n}{t_3\mu_3^n}-\frac{w_3\lambda_3^n}{t_3\mu_3^n}=0.
    \end{equation}
     Define
     \begin{equation*}
         p_1= \frac{t_1\mu_1^n}{t_3\mu_3^n},~p_2=\frac{t_2\mu_2^n}{t_3\mu_3^n}, ~p_3=-\frac{w_1\lambda_1^n}{t_3\mu_3^n}, ~p_4=-\frac{w_2\lambda_2^n}{t_3\mu_3^n}, ~p_5=-\frac{w_3\lambda_3^n}{t_3\mu_3^n}, 
     \end{equation*}
    and set
    \begin{equation*}
        q_1=\frac{\mu_1}{\mu_3},~ q_2=\frac{\mu_2}{\mu_3}, ~q_3=\frac{\lambda_1}{\mu_3}, ~q_4=\frac{\lambda_2}{\mu_3}, ~q_5=\frac{\lambda_3}{\mu_3},
    \end{equation*}
    \begin{equation*}
        r_1=\frac{t_1}{t_3}, ~r_2=\frac{t_2}{t_3}, ~r_3=\frac{w_1}{t_3}, ~r_4=\frac{w_2}{t_3}, ~r_5=\frac{w_3}{t_3}.
    \end{equation*}
    Consider the field $F=\C(x,y,\lambda_1,\lambda_2,\lambda_3, \mu_1,\mu_2,\mu_3)$ and define the projective height $\mathcal{H}$ on $F/\C(x)$ and by Lemma \ref{lem1} we obtain the estimate
    \begin{align*}
        \mathcal{H}\left(\frac{t_i\mu_i^n}{t_3\mu_3^n}\right)&\geq ~\mathcal{H}\left( \frac{\mu_i^{n}}{\mu_3^{n}}\right)-\mathcal{H}\left( \frac{t_i}{t_3}\right) \\&= n ~\cdot ~\mathcal{H}\left( \frac{\mu_i}{\mu_3}\right)-\mathcal{H}\left( \frac{t_i}{t_3}\right), \quad i=1,2,
    \end{align*}
    and similarly we get
    \begin{equation*}
        \mathcal{H}\left( \frac{w_i\lambda_i^n}{t_3\mu_3^n}\right)\geq n ~\cdot ~\mathcal{H}\left( \frac{\lambda_i}{\mu_3}\right)-\mathcal{H}\left( \frac{w_i}{t_3}\right), \quad i=1,2,3.
    \end{equation*} Thus we have, 
    \begin{equation*}
        \mathcal{H}(p_i)\geq n \mathcal{H}(q_i)- \mathcal{H}(r_i), \quad 1\leq i\leq 5.
    \end{equation*}
    Since $(W_n)_{n\geq 0}$ is non-degenerate $\mu_i/\mu_j$ is not a constant for any $i\neq j$, $\mathcal{H}(q_i)\neq 0$, for $i=1,2$. In particular, we have 
    \begin{equation}\label{eqt2.4}
        n\leq \left( \mathcal{H}(p_1)+\mathcal{H}(r_1)\right)\mathcal{H}(q_1)^{-1}.
    \end{equation}
    Moreover, the following upper bound can be deduced for the height of $W_n(x)$:
    \begin{align*}
        \mathcal{H}(W_n(x))=&\mathcal{H}(w_1\lambda_1^n+w_2\lambda_2^n+w_3\lambda_3^n)\\
        \leq & \mathcal{H}(w_1)+\mathcal{H}(w_2)+\mathcal{H}(w_3)+ n(\mathcal{H}(\lambda_1)+\mathcal{H}(\lambda_2)+\mathcal{H}(\lambda_3))\\
        \leq &n \left( \mathcal{H}(w_1)+\mathcal{H}(w_2)+\mathcal{H}(w_3)+\mathcal{H}(\lambda_1)+\mathcal{H}(\lambda_2)+\mathcal{H}(\lambda_3) \right).
    \end{align*}
    Using \eqref{eqt2.4}, we conclude that
    \begin{align}\label{eqt2.5}
        \mathcal{H}(W_n(x))\leq \left( \mathcal{H}(p_1)+\mathcal{H}(r_1)\right)&\mathcal{H}(q_1)^{-1}\left(\mathcal{H}(w_1)+\mathcal{H}(w_2)+\mathcal{H}(w_3)\right.\\
        &+\left.\mathcal{H}(\lambda_1)+\mathcal{H}(\lambda_2)+\mathcal{H}(\lambda_3)\right). \notag
    \end{align}
    Now consider \eqref{thm2eq1}, which by assumption has no proper vanishing subsum. Let $A=\{w_i, t_i, \lambda_i, \mu_i, i=1,2,3\}$ and put
        \[S:=\{\nu \in S_0: \nu (f)\neq 0 \text{ for some } f\in A\}\cup S_{\infty},\]
        where $S_0$ denotes the set of finite valuations and $S_{\infty}$ denotes the set of infinite valutaions on $F$. Then applying Proposition \ref{browthm} to \eqref{sumeq}, it follows that
        \begin{equation*}
          \mathcal{H}(p_1) \leq \max_{i=1, \ldots, 5} \mathcal{H}(p_i) \leq \binom{5}{2}\cdot \left(|S| + 2\mathfrak{g} - 2\right),
        \end{equation*}
        where $\mathfrak{g}$ is the genus of $F/\mathbb{C}$. We now estimate the genus and $|S|$ in terms of $\deg h$.
        We start with the genus. In order to use Castelnuovo’s inequality (Proposition \ref{castelnuvothm}), we define
        \[
         F_1 = \mathbb{C}(x, \lambda_1, \lambda_2, \lambda_3), \quad F_2 = \mathbb{C}(y, \mu_1,\mu_2, \mu_3).
        \]
        Note that $\mathbb{C}$ is the field of constants of $F_1, F_2$ and that $F = F_1F_2$. Let $k_i := [F:F_i]$,
        $i=1,2$. Recall that the $\lambda_i$’s and $\mu_i$’s are roots of a monic quadratic polynomial
        and that $[\mathbb{C}(x,y):\mathbb{C}(x)] < \deg h$ by Proposition \ref{nlemma1}. Thus
        \begin{align*}
            k_i&=[F:F_i]\leq [F:\C(x)]\\
            &=[F:\C(x,y)][\C(x,y):\C(x)]<3\deg h.
        \end{align*} For $i=1,2$,
        let $\mathfrak{g}_i$ be the genus of $F_i/\mathbb{C}$. Using \cite[Lemma 6.1]{fuchs2003diophantine}, we get that
        \begin{equation*}
            \mathfrak{g}_i\leq 3^9(\deg \mathfrak{D}+1):=3^9(C_1+1), 
        \end{equation*}
        where $C_1:=\deg \mathfrak{D}=3\deg a+\deg c$. By Castelnuovo’s inequality (Proposition~\ref{castelnuvothm}) we get
        \begin{align*}
            \mathfrak{g} &\leq k_{1} \mathfrak{g}_{1} + k_{2} \mathfrak{g}_{2} + (k_{1}-1)(k_{2}-1)\\
            &<6 \deg h \cdot ~3^9 (C_1+1)+ (3\deg h-1)^2\\
            &< 6\cdot ~3^{9}\deg h^2(C_1+1)+9 \deg h^2\\
            &=3 C_2\deg h^2,
        \end{align*} where $C_2:=2\cdot~3^9(C_1+1)+3$. Next to estimate $|S|$,  let
        \begin{align*}
            S_{1} &= \{ v \in S_{0} : \nu(\lambda_{1}) \neq 0 \ \text{or}\ \nu(\lambda_{2}) \neq 0 \ \text{or}\ \nu(\lambda_{3}) \neq 0 \}, \\
            S_{2} &= \{ v \in S_{0} : \nu(w_{1}) \neq 0 \ \text{or}\ \nu(w_{2}) \neq 0 \ \text{or}\ \nu(w_{3}) \neq 0 \},\\
            S_{3}& = \{ v \in S_{0} : \nu(\mu_{1}) \neq 0 \ \text{or}\ \nu(\mu_{2}) \neq 0 \ \text{or}\ \nu(\mu_{3}) \neq 0 \},\\
            S_{4}& = \{ v \in S_{0} : \nu(t_{1}) \neq 0 \ \text{or}\ \nu(t_{2}) \neq 0 \ \text{or}\ \nu(t_{3}) \neq 0 \}.
        \end{align*}
        Clearly, $|S| \leq |S_{1}| + |S_{2}| + |S_{3}| + |S_{4}| + |S_{\infty}|$. 
Since $[F:\mathbb{C}(x)] < 9 \deg h$ we have $|S_{\infty}| < 9 \deg h$. 
For the other sets, we argue as follows.

Note that the $\lambda_{i}$’s are integral over $\mathbb{C}(x)$ and therefore $\nu(\lambda_{i}) \geq 0$ 
for every finite valuation $\nu$. Thus $\nu(\lambda_1\lambda_2\lambda_3) > 0$ if and only if either 
$\nu(\lambda_1) > 0$ or $\nu(\lambda_2) > 0$ or $\nu(\lambda_3) > 0$. Also, by Vieta’s formulae we have $\lambda_{1}\lambda_{2}\lambda_3 = -c(x)$, where $c(x)$ is given in \eqref{recrel} . Further recall that by Lemma~\ref{lem1}.$(f)$ we have 
$\mathcal{H}(c(x)) = \deg c \cdot \mathcal{H}(x)$ and that 
\[
\sum_{\nu} \max(0,\nu(c(x))) = \mathcal{H}(c(x))
\]
by the sum formula. Then we have
\begin{align*}
    |S_{1}|& = |\{ v \in S_{0} : \nu(\lambda_{1}) > 0 \ \text{or}\ \nu(\lambda_{2}) > 0 \ \text{or}\ \nu(\lambda_{3}) > 0 \}|\\
    &=|\{ v \in S_{0} : \nu(\lambda_{1}\lambda_{2}\lambda_{3}) > 0\}|= |\{ v \in S_{0} : \nu(c(x)) > 0\}|\\
    &\leq \sum_{\nu}{'}1 \leq \sum_{\nu} \max(0,\nu(c(x))) = \mathcal{H}(c(x))\\
    &= \deg c \cdot \mathcal{H}(x) = \deg c \cdot [F:\mathbb{C}(x)] 
< \deg c \cdot 9 \deg h
\end{align*}
where the sum $\sum_{\nu}'$ runs over all valuations $\nu$ 
for which $\nu(c(x)) > 0$ holds.

In order to bound $|S_{3}|$ we argue similarly. We have that $\mu_1,\mu_2,\mu_3$ are the roots 
of the characteristic polynomial of $(W_{n}(y))_{n\geq 0}$, and are hence integral over $\mathbb{C}(y)$. 
Since $y$ is integral over $\mathbb{C}(x)$, we have that $\mu_1,\mu_2,\mu_3$ are integral over $\mathbb{C}(x)$. 
Therefore, as in the case of $S_{1}$ we conclude that $\nu(\mu_{i}) \geq 0$ for every finite valuation $\nu$. 
By Vieta’s formulae we have $\mu_{1}\mu_{2}\mu_3 = -c(y)$. Furthermore, since $h(x) = h(y)$ we have
\[
\deg h \cdot \mathcal{H}(y) = \mathcal{H}(h(y)) = \mathcal{H}(h(x)) = \deg h \cdot \mathcal{H}(x),
\]
and thus
\[
\mathcal{H}(y) = \mathcal{H}(x) = [F:\mathbb{C}(x)].
\]

Similarly,
\begin{equation*}
    |S_{3}| = |\{ v \in S_{0} : \nu(\mu_{1}) > 0 \ \text{or}\ \nu(\mu_{2}) > 0 \ \text{or}\ \nu(\mu_{3}) > 0 \}|<9\cdot \deg c \cdot\deg h
\end{equation*}

For $|S_{2}|$, note that
\[
|S_{2}| \leq |\{ v \in S_{0} : \nu(w_{1}) > 0 \ \text{or}\ \nu(w_{2}) > 0 \ \text{or}\ \nu(w_{3}) > 0 \}|
\]
\[
+ |\{ v \in S_{0} : \nu(w_{1}) < 0 \ \text{or}\ \nu(w_{2}) < 0 \ \text{or}\ \nu(w_{3}) < 0\}|.
\]
To get a bound for this part, we will find a form for $w_1,w_2,w_3$ in terms of the initial values and characteristic roots of the recurrence $W_n(x)$. From the initial conditions
\begin{align*}
 W_0(x) &= w_1(x)+w_2(x)+w_3(x),\\  
 W_1(x) &= w_1(x)\lambda_1(x) + w_2(x)\lambda_2(x) + w_3(x)\lambda_3(x),\\
 W_2(x) &= w_1(x)\lambda_1(x)^2 + w_2(x)\lambda_2(x)^2 + w_3(x)\lambda_3(x)^2, 
\end{align*}
we get
\begin{align}
w_1(x)\Delta(x) &= W_2(x)\big(\lambda_3(x)-\lambda_2(x)\big) 
 + W_1(x)\big(\lambda_2(x)^2-\lambda_3(x)^2\big) \notag\\
&\quad + W_0(x)\,\lambda_2(x)\lambda_3(x)\big(\lambda_3(x)-\lambda_2(x)\big), \label{eqt2.1} \\ 
w_2(x)\Delta(x) &= W_2(x)\big(\lambda_1(x)-\lambda_3(x)\big) 
 + W_1(x)\big(\lambda_3(x)^2-\lambda_1(x)^2\big)\notag\\
&\quad + W_0(x)\,\lambda_1(x)\lambda_3(x)\big(\lambda_1(x)-\lambda_3(x)\big), \label{eqt2.2}\\
w_3(x)\Delta(x) &= W_2(x)\big(\lambda_2(x)-\lambda_1(x)\big) 
 + W_1(x)\big(\lambda_1(x)^2-\lambda_2(x)^2\big) \notag \\
&\quad + W_0(x)\,\lambda_1(x)\lambda_2(x)\big(\lambda_2(x)-\lambda_1(x)\big). \label{eqt2.3}
\end{align}

where
\begin{align*}
\Delta(x) &= \lambda_1(x)\lambda_2(x)\big(\lambda_2(x)-\lambda_1(x)\big)
+ \lambda_1(x)\lambda_3(x)\big(\lambda_1(x)-\lambda_3(x)\big) \\
&\quad + \lambda_2(x)\lambda_3(x)\big(\lambda_3(x)-\lambda_2(x)\big) \\
&= -6i\sqrt{3}\,\sqrt{\mathfrak{D}(x)}.
\end{align*}
That is, we write
\[
w_1=\frac{\ell_1(x)}{\Delta(x)}, ~w_2=\frac{\ell_2(x)}{\Delta(x)}, ~w_3=\frac{\ell_3(x)}{\Delta(x)},
\]
where $\ell_1(x), \ell_2(x), \ell_3(x)$ are the functions in the right hand side of the equations \eqref{eqt2.1},\eqref{eqt2.2}, \eqref{eqt2.3} respectively. Recall that $W_{0}(x), W_{1}(x), W_2(x), \lambda_{1}, \lambda_{2}, \lambda_{3}$ are integral over $\mathbb{C}(x)$, 
and thus also $\ell_1, \ell_2, \ell_3$ and $\Delta(x)$. 
Therefore, for any $\nu \in S_{0}$ we have $\nu(\ell_i)\geq 0$ and $\nu(\Delta(x))\geq 0$. Thus
\begin{align*}
   \nu(w_i)=&\nu\left(\frac{\ell_i(x)}{\Delta(x)}\right)= \nu(\ell_i(x)) + \nu \left( \frac{1}{\Delta(x)} \right)= \underbrace{\nu(\ell_i(x))}_{\geq 0}- \underbrace{\nu(\Delta(x))}_{\geq 0}, \quad i=1,2,3. 
\end{align*}
Hence for $\nu \in S_{0}$ it follows that
\[
\nu(w_{i}) > 0 \ \text{implies}\ \nu(\ell_i(x)) > 0,
\]
\[
\nu(w_{i}) < 0 \ \text{implies}\ \nu(\Delta(x)) > 0, \quad i=1,2,3..
\]

Further note that since $\nu(\ell_i(x)) \geq 0$ for all $i=1,2,3$ and any $\nu \in S_{0}$ we have that either 
$\nu(\ell_1(x)) > 0$ or $\nu(\ell_2(x)) > 0$ or $\nu(\ell_3(x)) > 0$ if and only if
\[
\nu\big(\ell_1(x)\ell_2(x)\ell_3(x)\big) > 0.
\]
Also we can write
\[
\ell_1=(\lambda_3-\lambda_2)[W_2-W_1(\lambda_3+\lambda_2)+W_0\lambda_3\lambda_2]
\]
\[
\ell_2=(\lambda_1-\lambda_3)[W_2-W_1(\lambda_1+\lambda_3)+W_0\lambda_1\lambda_3]
\]
\[
\ell_3=(\lambda_2-\lambda_1)[W_2-W_1(\lambda_2+\lambda_1)+W_0\lambda_2\lambda_1].
\] Now using the Vieta's formulae we get,
\begin{align*}
    \ell_1\ell_2\ell_3=& -\Delta\left( W_2^3+2a W_1W_2^2+bW_0W_2^2+(a^2+b)W_1^2W_2\right.\\
    &\left. +(ab+3c)W_0W_1W_2 +acW_0^2W_2+(ab-c) W_1^3 \right.\\
    & \left. +(b^2+ac)W_1^2W_0+ 2bcW_0^2W_1+cW_0^3\right).
\end{align*}
Thus, 
\[
\mathcal{H}(\ell_1\ell_2\ell_3)=C_3\mathcal{H}(x),
\]
where $C_3(W_i, a,b,c, i=0,1,2)$, a constant depending only on the initial values and coefficients of the recurrence $W_n(x)$.
%$C_3:=\deg \Delta+\max \{ 3\deg W_2, \deg a +\deg W_1+2\deg W_2, \deg b+\deg W_0+2\deg W_2, 2 \deg a+2\deg W_1+\deg W_2, \deg b+2\deg W_1+\deg W_2, \}$
Therefore,
\begin{align*}
   |S_{2}|& \leq |\{ v \in S_{0} : \nu(w_{1}) > 0 \ \text{or}\ \nu(w_{2}) > 0 \ \text{or}\ \nu(w_{3}) > 0 \}|\\
   &+ |\{ v \in S_{0} : \nu(w_{1}) < 0 \ \text{or}\ \nu(w_{2}) < 0 \ \text{or}\ \nu(w_{3}) < 0\}|.\\
   &\leq |\{ v \in S_{0} : \nu(\ell_1(x)) > 0 \ \text{or}\ 
                     \nu(\ell_2(x)) > 0 \ \text{or}\ 
                     \nu(\ell_3(x)) > 0 \}|\\
   &\quad + |\{ v \in S_{0} : \nu(\Delta(x)) > 0 \}|\\
   & = |\{ v \in S_{0} : \nu(\ell_1(x)\ell_2(x)\ell_3(x)) > 0 \}|+ |\{ v \in S_{0} : \nu(\Delta(x)) > 0 \}|,
\end{align*} and then arguing similarly as for $S_{1}$ we get
\begin{align*}
    |S_{2}| \leq& \mathcal{H}(\ell_1(x)\ell_2(x)\ell_3(x))) 
   + \mathcal{H}(\Delta(x))=\mathcal{H}(\ell_1(x)\ell_2(x)\ell_3(x))) 
   + \frac{1}{2}\mathcal{H}(\mathfrak{D}(x))\\
   =& C_3\mathcal{H}(x)+\frac{1}{2}\deg \mathfrak{D} \mathcal{H}(x)=C_4\mathcal{H}(x) < 9C_4\, \deg h,
\end{align*}
where $C_4=C_3+\frac{1}{2}\deg \mathfrak{D}$. Similarly we argue for $|S_{4}|$ and obtain that
$|S_4|\leq 9C_4 \deg h.$ This gives
\begin{align*}
  |S| & \le |S_1| + |S_2| + |S_3| + |S_4| + |S_\infty|\\ 
&< 18 \deg c\cdot \deg h + 18 C_4 \, \deg h+ 9\, \deg h \\
&= (18 \deg c+18 C_4 +9) \deg h =C_5\, \deg h,
\end{align*} where $C_5:=(18\deg c+18C_4+9).$
Finally we get
\begin{align*}
\mathcal{H}(p_1) &\le 10 ~(2\mathfrak{g} - 2 + |S|) < 10~(6C_2 \, \deg h^2 + C_5\, \deg h- 2) \\
&< 10~(6C_2 + C_5) \deg h^2= C_6 \deg h^2,
\end{align*}
where $C_6:=10(6C_2+C_5)$.
We continue to estimate the terms in \eqref{eqt2.5}. To give an upper bound on $\mathcal{H}(\lambda_i), ~i=1,2,3$ we use \eqref{eqt2.6}, \eqref{eqt2.7}, \eqref{eqt2.8} and obtain the following.
\begin{align*}
    \mathcal{H}(\lambda_1)=&\mathcal{H}(u+v+\frac{1}{3}a) \le \mathcal{H}(u)+\mathcal{H}(v)+\mathcal{H}(\frac{1}{3}a)\\
    =& \mathcal{H}(\sqrt[3]{\frac{q}{2}+\sqrt{\mathfrak{D}}})+\mathcal{H}(\sqrt[3]{\frac{q}{2}-\sqrt{\mathfrak{D}}})+\mathcal{H}(a) \leq  \frac{2}{3}\left(\mathcal{H}(\frac{q}{2})+\frac{1}{2}\mathcal{H}(\mathfrak{D}) \right)+\mathcal{H}(a)\\
    =& \left(\frac{2}{3}(\deg q+ \frac{1}{2}\deg \mathfrak{D})+\deg a\right) \mathcal{H}(x)=C_6\mathcal{H}(x),
\end{align*}
where $C_6:= \frac{2}{3}(\deg q+\frac{1}{2}\deg \mathfrak{D})+\deg a$. Next we bound for $\mathcal{H}(\lambda_2)$.
\begin{align*}
    \mathcal{H}(\lambda_2)=& \mathcal{H}(-\frac{u+v}{2}+ i\sqrt{3}\frac{u-v}{2}+\frac{1}{3}a)
    \leq 2\mathcal{H}(u)+2\mathcal{H}(v)+\mathcal{H}(a)\\
    \leq &2\left(\mathcal{H}(\sqrt[3]{\frac{q}{2}+\sqrt{\mathfrak{D}}})+\mathcal{H}(\sqrt[3]{\frac{q}{2}-\sqrt{\mathfrak{D}}})\right)+\mathcal{H}(a)
    \leq 4\left(\mathcal{H}(\frac{q}{2})+\frac{1}{2}\mathcal{H}(\mathfrak{D}) \right)+\mathcal{H}(a)\\
    =& \left(4(\deg q +\frac{1}{2}\deg \mathfrak{D})+\deg a \right)\mathcal{H}(x)=C_7\mathcal{H}(x),
\end{align*}
where $C_7:=4(\deg q +\frac{1}{2}\deg \mathfrak{D})+\deg a$.
In a similar manner, we obtain
\[
\mathcal{H}(\lambda_3)\leq C_7\mathcal{H}(x).
\]
Thus, 
\[
\mathcal{H}(\lambda_1)+\mathcal{H}(\lambda_2)+\mathcal{H}(\lambda_3)\leq (C_6+2C_7)\mathcal{H}(x).
\]
Since $\mathcal{H}(x)=\mathcal{H}(y)$, we obtain the same upper bound for $\mathcal{H}(\mu_1),\mathcal{H}(\mu_2)$ and $\mathcal{H}(\mu_3)$, that is
\begin{equation*}
    \mathcal{H}(\mu_1)\leq C_6 \mathcal{H}(x) \quad \text{and} \quad \mathcal{H}(\mu_i)\leq C_7 \mathcal{H}(x), \quad i=2,3.
\end{equation*}
Furthermore, we have
\begin{align*}
    \mathcal{H}(w_1)=&~\mathcal{H}\left(\frac{\ell_1(x)}{\Delta(x)}\right)
    \leq \mathcal{H}(\ell_1(x))+\mathcal{H}(\Delta(x))\\
    \leq &~ \mathcal{H}\left((\lambda_3-\lambda_2)[W_2-W_1(\lambda_3+\lambda_2)+W_0\lambda_3\lambda_2]\right)+ \frac{1}{2}\mathcal{H}(\mathfrak{D}(x))\\
    \leq &~ 3\mathcal{H}(\lambda_3)+3\mathcal{H}(\lambda_2)+\mathcal{H}(W_2)+\mathcal{H}(W_1)+\mathcal{H}(W_0)+\frac{1}{2}\mathcal{H}(\mathfrak{D}(x))\\
    \leq &~ \left(12 C_7+\deg W_2+\deg W_1+\deg W_0+\frac{1}{2} \deg \mathfrak{D} \right)\mathcal{H}(x)=C_8 \mathcal{H}(x),
\end{align*}
where $C_8:=12 C_7+\deg W_2+\deg W_1+\deg W_0+\frac{1}{2}\deg \mathfrak{D}$. Similarly we get
\begin{align*}
    \mathcal{H}(w_2)\leq&~ 3\mathcal{H}(\lambda_1)+3\mathcal{H}(\lambda_3)+\mathcal{H}(W_2)+\mathcal{H}(W_1)+\mathcal{H}(W_0)+\frac{1}{2}\mathcal{H}(\mathfrak{D}(x))\\
    \leq &~(3C_6+3C_7+\deg W_2+\deg W_1+\deg W_0+\frac{1}{2} \deg \mathfrak{D})\mathcal{H}(x)\\
    =& ~ C_9\mathcal{H}(x),
\end{align*}
where $C_9:=3C_6+3C_7+\deg W_2+\deg W_1+\deg W_0+\frac{1}{2} \deg \mathfrak{D}$ and
\[
\mathcal{H}(w_3)\leq C_9\mathcal{H}(x).
\] Therefore, 
\begin{equation*}
    \mathcal{H}(w_1)+\mathcal{H}(w_2)+\mathcal{H}(w_3)\leq (C_8+2C_9)\mathcal{H}(x).
\end{equation*}
Next, we estimate the height of $r_1$ as follows:
\begin{align*}
    \mathcal{H}(r_1)=&~ \mathcal{H}\left(\frac{t_1}{t_3}\right)\\
    =&~ \mathcal{H}\left( \frac{(\mu_3-\mu_2)[W_2-W_1(\mu_3+\mu_2)+W_0\mu_3\mu_2]}{(\mu_2-\mu_1)[W_2-W_1(\mu_2+\mu_1)+W_0\mu_2\mu_1]}\right)  \\
    \leq &~ 3\mathcal{H}(\mu_1)+6\mathcal{H}(\mu_2)+3\mathcal{H}(\mu_3)+2\mathcal{H}(W_2)+ 2\mathcal{H}(W_1)+2\mathcal{H}(W_0)\\
    \leq &~ (3C_6+9C_7+2\deg W_2+2\deg W_1+2\deg W_0  )\mathcal{H}(x)=C_{10}\mathcal{H}(x),
\end{align*}
where $C_{10}:=3C_6+9C_7+2\deg W_2+2\deg W_1+2\deg W_0  $. 

We now find a lower bound for $\mathcal{H}(q_1)$ in terms of $\mathcal{H}(y)$. We know that $\mathcal{H}(q_1)\geq 1$. Moreover,
\begin{align*}
   \mathcal{H}(q_1)&=\mathcal{H}\left(\frac{\mu_1}{\mu_2}\right)=\mathcal{H}(\mu_1)+\mathcal{H}(\mu_2), \quad(\text{since $\nu(\mu_1)\nu(\mu_2)<0$})\\ 
   &=\mathcal{H}(u+v+\frac{1}{3}a)+\mathcal{H}(-\frac{u+v}{2}+i\sqrt{3}\frac{u-v}{2}+\frac{1}{3}a)\\
   &=\mathcal{H}(u)+\mathcal{H}(v)+\mathcal{H}(\frac{1}{3}a)+\mathcal{H}\left(-\frac{u+v}{2}+i\sqrt{3}\frac{u-v}{2}+\frac{1}{3}a\right)\\
   &\geq \mathcal{H}(u)+\mathcal{H}(v)+\mathcal{H}(a)+\mathcal{H}\left(-\frac{u+v}{2}+\frac{1}{3}a\right)-\mathcal{H}\left(\frac{u-v}{2}\right)\\
   &= \mathcal{H}(u)+\mathcal{H}(v)+\mathcal{H}(a)+\mathcal{H}(u)+\mathcal{H}(v)+\mathcal{H}(a)-\mathcal{H}(u)-\mathcal{H}(v)\\
   &= \mathcal{H}(u)+\mathcal{H}(v)+2\mathcal{H}(a)=\mathcal{H}\left(\sqrt[3]{\frac{q}{2}+\sqrt{\mathfrak{D}}}\right)+\mathcal{H}\left(\sqrt[3]{\frac{q}{2}-\sqrt{\mathfrak{D}}}\right)+2\mathcal{H}(a)\\
   &=\frac{1}{3}\mathcal{H}\left(\frac{q}{2}+\sqrt{\mathfrak{D}}\right)+\frac{1}{3}\mathcal{H}\left(\frac{q}{2}-\sqrt{\mathfrak{D}}\right)+2\mathcal{H}(a)\\
   &\geq \frac{1}{3}\left(\mathcal{H}\left(\frac{q}{2}\right)-\frac{1}{2}\mathcal{H}(\mathfrak{D})\right)+\frac{1}{3}\left(\mathcal{H}\left(\frac{q}{2}\right)-\frac{1}{2}\mathcal{H}(\mathfrak{D})\right)+2\mathcal{H}(a)\\
   &=\frac{2}{3}\mathcal{H}\left(\frac{q}{2}\right)-\frac{1}{3}\mathcal{H}(\mathfrak{D})+2\mathcal{H}(a)=\left(\frac{2}{3}\deg q-\frac{1}{3}\deg \mathfrak{D}+ 2\deg a\right)\mathcal{H}(y)=C_{11}\mathcal{H}(y).
\end{align*}
  By \eqref{eqndeg} we have $C_{11}:=(\frac{2}{3}\deg q-\frac{1}{3}\deg \mathfrak{D}+ 2\deg a)$ is a positive constant that depends only on $\deg a, \deg b$ and $\deg c$.
Hence we found all the required bounds for the terms in \eqref{eqt2.5} and thus we get
\begin{align}\label{eq29}
    \mathcal{H}(W_n(x))&< \frac{(C_6 \deg h^2+ 2C_{10}9\deg h^2)}{C_{11}9\deg h}\left((C_8+2C_9)9\deg h+ (C_6+2C_7)9\deg h\right)\nonumber\\
    <&\deg h^2 \left(\frac{(C_6+18C_{10})(9C_8+18C_9+9C_6+18C_7)}{9C_{11}}\right):=C_{12}\deg h^2,
\end{align}
where $C_{12}(a,b,c,W_i: i=0,1,2)$ is a positive constant. To give a suitable lower bound for $\mathcal{H}(W_n(x))$, note that since $W_n=g\circ h$ we have 
\begin{equation*}
    \mathcal{H}(W_n(x))=\deg g\deg h~ [F: \C(x)]=\deg g \, \deg h \cdot [F : \mathbb{C}(x,y)] \cdot [\mathbb{C}(x,y) : \mathbb{C}(x)].
\end{equation*}
By \cite[Lemma 3]{fuchs2019decomposable} it follows that $[\mathbb{C}(x,y) : \mathbb{C}(x)] \ge \frac{1}{2} \deg h$. Therefore, we have
\begin{equation}\label{eq30}
    \mathcal{H}(W_n(x)) \ge \frac{1}{2} \deg g \, \deg h^2 \cdot [F : \mathbb{C}(x,y)] \ge \frac{1}{2} \deg g \, \deg h^2.
\end{equation}
Finally from \eqref{eq29} and \eqref{eq30},
\[
\frac{1}{2} \deg g \, \deg h^2 \le \mathcal{H}(W_n(x)) < C_{12} \deg h^2,
\]
and therefore that $\deg g < 2C_{12}$. This finishes the proof of Theorem \ref{2thm}. \qed
\section{Proof of Theorem \ref{thm2}}
Since \eqref{eq5.8} has no proper vanishing subsum, we rewrite \eqref{eq5.8} as 
\begin{equation*}\label{neq2.1}
			1-\frac{\sigma_1\alpha_1^{n_1}}{\delta_2\beta_2^{n_2}}-\frac{\sigma_2\alpha_2^{n_1}}{\delta_2\beta_2^{n_2}}-\frac{\sigma_1\alpha_1^{n_2}}{\delta_2\beta_2^{n_2}}-\frac{\sigma_2\alpha_2^{n_2}}{\delta_2\beta_2^{n_2}}+\frac{\delta_1\beta_1^{n_1}}{\delta_2\beta_2^{n_2}}+\frac{\delta_2\beta_2^{n_1}}{\delta_2\beta_2^{n_2}}+\frac{\delta_1\beta_1^{n_2}}{\delta_2\beta_2^{n_2}}=0.
\end{equation*} Now put
\begin{equation*}
			p_1=-\frac{\sigma_1\alpha_1^{n_2}}{\delta_2\beta_2^{n_2}}, \quad p_2=\frac{\sigma_2\alpha_2^{n_2}}{\delta_2\beta_2^{n_2}}, \quad p_3=\frac{\delta_1\beta_1^{n_2}}{\delta_2\beta_2^{n_2}},
		\end{equation*} and also
		\begin{equation*}
			q_1=\frac{\alpha_1}{\beta_2},~ q_2=\frac{\alpha_2}{\beta_2},~ q_3=~ \frac{\beta_1}{\beta_2},~
			r_1=\frac{\sigma_1}{\delta_2},~r_2=\frac{\sigma_2}{\delta_2}, ~ r_3=\frac{\delta_1}{\delta_2}.
		\end{equation*}
		Consider the field $K=\C(x,y,\alpha_1,\alpha_2,\beta_1,\beta_2)$ and define the projective height $\mathcal{H}$ on $K/\C(x)$ and we obtain the estimate
		\begin{equation*}
			\mathcal{H}\left(\frac{\sigma_i\alpha_i^{n_2}}{\delta_2\beta_2^{n_2}}\right)\geq n_2\cdot \mathcal{H}\left(\frac{\alpha_i}{\beta_2}\right)-\mathcal{H}\left(\frac{\sigma_i}{\delta_2}\right), \quad i=1,2.
		\end{equation*} Similarly we have 
		\begin{equation*}
			\mathcal{H}\left(\frac{\delta_1\beta_1^{n_2}}{\delta_2\beta_2^{n_2}}\right)\geq n_2\cdot \mathcal{H}\left(\frac{\beta_1}{\beta_2}\right)-\mathcal{H}\left(\frac{\delta_1}{\delta_2}\right).
		\end{equation*} That is, $$\mathcal{H}(p_i)\geq n_2 \cdot \mathcal{H}(q_i)-\mathcal{H}(r_i),$$ for $i=1,2,3$. Since $(u_n(y))_{n\geq0}$ is non-degenerate, we have $q_3\notin \C$. It then follows that $\mathcal{H}(q_3)\neq 0$. So,
		\begin{equation}\label{eq5.10}
			n_2\leq (\mathcal{H}(p_3)+\mathcal{H}(r_3))\cdot \mathcal{H}(q_3)^{-1}.
		\end{equation} Alternatively, we derive the following estimate for height of $u_{n_1}(x)+u_{n_2}(x)$;
		\begin{align*}
			\mathcal{H}(u_{n_1}(x)+u_{n_2}(x))&=\mathcal{H}(\sigma_1\alpha_1^{n_1}+\sigma_2\alpha_2^{n_1}+\sigma_1\alpha_1^{n_2}+\sigma_2\alpha_2^{n_2})\\
			&\leq 2\mathcal{H}(\sigma_1)+2\mathcal{H}(\sigma_2)+n_1\mathcal{H}(\alpha_1)+n_1\mathcal{H}(\alpha_2)\\
			&+n_2\mathcal{H}(\alpha_1)+n_2\mathcal{H}(\alpha_2)\\
			&\leq 2n_2 (\mathcal{H}(\sigma_1)+\mathcal{H}(\sigma_2)+\mathcal{H}(\alpha_1)+\mathcal{H}(\alpha_2)).
		\end{align*}
		From \eqref{eq5.10}, we get that
		\begin{equation*}\label{eq21}
		\mathcal{H}(u_{n_1}(x)+u_{n_2}(x))\leq 2(\mathcal{H}(p_3)+\mathcal{H}(r_3))\cdot \mathcal{H}(q_3)^{-1}(\mathcal{H}(\sigma_1) 
		\end{equation*}
        \begin{equation*}
            +\mathcal{H}(\sigma_2)+\mathcal{H}(\alpha_1)+\mathcal{H}(\alpha_2)).
        \end{equation*}
        One can complete the remaining proof by following the arguments as on proof of Theorem \ref{2thm} and hence we omit the details.
By proceeding like the proof of Theorem \ref{2thm} we obtain the  bound as follows.
\[
C = 32 \left(2(\deg A_0 + \deg u_0 + \deg u_1) + 7C_{13} + 2C_{14} + 1\right) (\deg u_0 + \deg u_1 + 6C_{13}),
\] where 
$$C_{13} := \max\{\deg A_0, 2\deg A_1\}$$
and
\[
C_{14} := \max\{2\deg u_{1}, \deg u_{0}+\deg u_{1}+\deg A_{1},\ 2\deg u_{0}+\deg A_{0}\}.
\] This completes the proof of Theorem \ref{thm2}. \qed
\subsection{Proof of Corollary \ref{corolary1}:} Assume that $u_{n_1}(x)+u_{n_2}(x)=v_{m_1}(y)+v_{m_2}(y)$ has infinitely many solutions with $x,y \in \Z$. By Theorem \ref{biluthm}, 
	\begin{equation}\label{eq5.6}
		u_{n_1}(x)+u_{n_2}(x)=\phi(f_1(\lambda(x)))
	\end{equation} and 
	\begin{equation}\label{eq5.7}
		v_{m_1}(x)+v_{m_2}(x)=\phi(g_1(\mu(x))), 
	\end{equation} where $\phi(x)\in \Q[x]$, $\lambda(x),\mu(x)\in \Q[x]$ are linear polynomials, and $(f_1,g_1)$ is a standard pair over $\Q$. Since $u_n$ and $v_m$ are non constant, we have $\phi$ is non constant. Now suppose that $\deg f_1>1$ and $\deg g_1>1$. Then either $u_{n_1}+u_{n_2}$ or $v_{m_1}+v_{m_2}$ is a composite of either cyclic or dihedral polynomial, a contradiction. 
	
	If $\deg g_1=1$ and $\deg f_1=1$ then from \eqref{eq5.7} we get $(v_{m_1}(x)+v_{m_2}(x))\circ \ell_1(x)=\phi(x)$, where $\ell_1(x)$ is a linear polynomial. So then we have, 
    \begin{equation*}
        u_{n_1}(x)+u_{n_2}(x)=(v_{m_1}\circ \ell_1+v_{m_2}\circ \ell_1)\circ f_1 \circ \lambda (x) = v_{m_1}\circ \ell (x)+ v_{m_2}\circ \ell(x),
    \end{equation*} where $\ell(x)\in \Q[x]$ is linear. Now if $\deg g_1=1$ and $\deg f_1>1$, then we have $v_{m_1}(\ell(x))+v_{m_2}(\ell(x))=\phi(x)$, for linear $\ell(x)\in \Q[x]$. So that we obtain 
    \begin{equation}\label{eq25}
        u_{n_1}(x)+u_{n_2}(x)=(v_{m_1}+v_{m_2})(\ell(f_1(\lambda(x)))).
    \end{equation} Since by assumption \eqref{eq25} has no proper vanishing subsum and that $u_{n_1}+u_{n_2}$ is also not a composite of a cyclic or a dihedral polynomial, by Theorem \ref{thm2} we get $\deg (v_{m_1}+v_{m_2})<C$.
    \qed
\section{Concluding Remark}
   In Theorem \ref{2thm} we use assumptions \eqref{eqval13} and \eqref{eqval14} to find a lower bound for $\mathcal{H}(\mu_1/\mu_3)$. One can avoid those assumptions by assuming $\mathcal{H}(\mu_1/\mu_3) > C\cdot \mathcal{H}(y)$ for some constant $C>0$. Furthermore, in Theorem \ref{2thm} and \ref{thm2}, we are assuming that \eqref{thm2eq1} and \eqref{eq5.8} has no proper vanishing subsum, respectively. It will be an interesting problem to find some explicit cases for which there does not exist proper vanishing subsum. 	

{\bf Acknowledgment:} D.N. and S.S.R. are supported by a grant from Anusandhan National Research Foundation (File No.:CRG/2022/000268).

\end{document}